\numberwithin{equation}{section}
\theoremstyle{plain}
\numberwithin{main}{section}
\newtheorem{theorem}{Theorem}
\numberwithin{theorem}{section}
\newtheorem{corollary}{Corollary}
\numberwithin{corollary}{section}
\newtheorem{definition}{Definition}
\numberwithin{definition}{section}
\newtheorem{lemma}{Lemma}
\numberwithin{lemma}{section}
\numberwithin{proposition}{section}
\newtheorem{remark}{Remark}
\numberwithin{remark}{section}
\numberwithin{example}{section}
\numberwithin{equation}{section}
\newcommand {\be}{\begin{equation}}
\newcommand {\ee}{\end{equation}}
\newcommand{\h}{\begin{eqnarray*}}
\newcommand{\e}{\end{eqnarray*}}
\newcommand{\CC}{\mathbf{C}}
\newcommand{\ZZ}{\mathbf{Z}}
\newcommand{\HHH}{\mathbf{H}}
\newcommand{\TT}{\mathcal{T}}
\newcommand{\WW}{\Lambda^2\mathcal{T}}
\newcommand{\M}{\rightarrow}
\newcommand{\ii}{\sqrt{-1}}
\begin{document}
\title[Mod 3 Congruence and Twisted Signature]{Mod 3 Congruence and Twisted Signature of 24 Dimensional String Manifolds}

\author{Qingtao Chen}
\address{Q. Chen, Mathematics Section,
International Center for Theoretical Physics,
Strada Costiera, 11,
I - 34151 Trieste Italy (qchen1@ictp.it)}

\author{Fei Han}
\address{F. Han, Department of Mathematics, National University of
Singapore, Block S17, 10 Lower Kent Ridge Road, Singapore 119076
(mathanf@nus.edu.sg)}

\begin{abstract}
In this paper, by combining modularity of the Witten genus and the modular forms constructed by Liu and Wang, we establish mod 3 congruence properties of certain twisted signatures of 24 dimensional string manifolds.
\end{abstract}

\maketitle

\setcounter{tocdepth}{5} \setcounter{page}{1}

\section*{Introduction}

Let $M$ be a $2n$ dimensional smooth closed oriented manifold.  Let $g^{TM}$ be a Riemmian metric on $TM$ and $\nabla^{TM}$ the associated Levi-Civita connection.  Let $V$ be a complex vector bundle over $M$ with a Hermitian metric $h^V$ and a unitary connection $\nabla^V$.

Let $\Lambda_\CC (T^*M)$ be the complexified exterior algebra bundle of $TM$. Let $\langle \ , \
\rangle_{\Lambda_\CC (T^*M)}$ be the Hermitian metric on $\Lambda_\CC(T^*M)$
induced by $g^{TM}$. Let $dv$  be the Riemannian volume form associated to $g^{TM}$. Then $\Gamma(M, \Lambda_\CC (T^*M))$ has a Hermitian metric such that
for $\alpha, \alpha'\in \Gamma(M, \Lambda_\CC (T^*M))$,
$$\langle \alpha, \alpha'\rangle=\int_{M}\langle \alpha,
 \alpha'\rangle_{\Lambda_\CC (T^*M)}\,dv.$$

For $X\in TM$, let $c(X)$ be the Clifford action on $\Lambda_\CC (T^*M)$ defined by $c(X)=X^*-i_X$, where $X^*\in T^*M$ corresponds to $X$ via
$g^{TM}$. Let $\{e_1,e_2, \cdots, e_{2n}\}$ be an oriented orthogonal basis of $TM$. Set
$$\Omega=(\sqrt{-1})^{n}c(e_1)\cdots c(e_{2n}).$$ Then one can show that $\Omega$ is independent of the choice of the orthonormal basis and $\Omega_V=\Omega\otimes 1$ is a self-adjoint element acting
on $\Lambda_\CC (T^*M)\otimes V$ such that $\Omega_V^2=\mathrm{Id}|_{\Lambda_\CC(T^*M)\otimes V}$.

Let $d$ be the exterior differentiation operator and $d^*$ be the formal adjoint of $d$ with respect to the Hermitian metric.  The operator
$$D_{Sig}:=d+d^*=\sum_{i=1}^{2n} c(e_i)\nabla_{e_i}^{\Lambda_\CC (T^*M)}: \Gamma(M, \Lambda_\CC (T^*M))\to \Gamma(M, \Lambda_\CC (T^*M))$$ is the signature operator and the more general twisted signature operator is defined as (c.f. \cite{G})
$$ D_{Sig}\otimes V:=\sum_{i=1}^{2n} c(e_i)\nabla_{e_i}^{\Lambda_\CC (T^*M)\otimes V}: \Gamma(M, \Lambda_\CC (T^*M)\otimes V)\to \Gamma(M, \Lambda_\CC (T^*M)\otimes V).$$

The operators $ D_{Sig}\otimes V$ and $\Omega_V$ anticommunicate. If we decompose
$\Lambda_\CC (T^*M)\otimes V=\Lambda^+_\CC (T^*M)\otimes V\oplus \Lambda^-_\CC (T^*M)\otimes V$ into $\pm 1$ eigenspaces of $\Omega_V$, then $D_{Sig}\otimes V$ decomposes to define
$$(D_{Sig}\otimes V)^{\pm}: \Gamma(M, \Lambda^{\pm}_\CC (T^*M)\otimes V)\to \Gamma(M, \Lambda^{\mp}_\CC (T^*M)\otimes V).$$

The index of the operator $(D_{Sig}\otimes V)^{+}$ is called the {\it twisted signature} of $M$ and denoted by $Sig(M, V)$. By the Atiyah-Singer index theorem, 
$$Sig(M, V)=\int_M  \widehat{L}(TM,\nabla ^{TM})\mathrm{ch}(V, \nabla^V),$$ (see section 1.2 for the definitions of $ \widehat{L}$ and $\mathrm{ch}$ as well as some explanation of the above formula.)

When $V$ is trivial, $Sig(M, V)$ is just the signature of $M$, denoted by $Sig(M)$.
Let $T_\CC M$ be the complexification of $TM$. When $V=T_\CC M, T_\CC M\otimes T_\CC M$ and  $\Lambda^2T_\CC M$, simply denote $Sig(M, V)$ by $Sig(M, \mathcal{T}), Sig(M, \mathcal{T}\otimes \mathcal{T})$ and  $Sig(M, \WW)$ respectively.

Further assume that $M$ is spin. Let $O$ be the $SO(2n)$ bundle of oriented orthogonal frames in
$TM$. Since $TM$ is spin, the $SO(2n)$ bundle
$\xymatrix@C=0.5cm{O\ar[r]^{\varrho}& M}$ lifts to a $Spin(2n)$
bundle
$\xymatrix@C=0.5cm{O'\ar[r]^{\sigma}& O\ar[r]^{\varrho}&
M}$
such that $\sigma$ induces the covering projection $Spin(2n)\M
SO(2n)$ on each fiber. Let $\Delta(TM), \Delta(TM)^{\pm}$ denote the Hermitian
bundles of spinors
$$\Delta(TM)=O'\times_{Spin(2n)}S_{2n}, \ \ \ \ \ \Delta(TM)^{\pm}=O'\times_{Spin(2n)}S_{\pm, {2n}},$$ where
$S_{2n}=S_{+, {2n}}\oplus S_{-, {2n}}$ is the complex spinor representation. The connection $\nabla^{TM}$ on $O$ lifts to a connection on
$O'$. $\Delta(TM), \Delta(TM)^{\pm}$ are then naturally endowed with a unitary
connection, which we simply denote by $\nabla$.

The elements of $TM$ act by Clifford multiplication on $\Delta(TM)\otimes
V$. Define the twisted Dirac operator $D\otimes V$ to be
$\overset{2n}{\underset{i=1}{\sum}}
e_i\nabla_{e_i}^{\Delta(TM)\otimes V}$. Let $(D\otimes V)^{\pm}$ denote the
restriction of $D\otimes V$ to $\Delta(TM)^{\pm}\otimes V$.  The twisted operator $(D\otimes T_\CC M)^+$ is known as the {\it Rarita-Schwinger operator} \cite{Wit5}. By the Atiyah-Singer index theorem, 
$$Ind((D\otimes V)^{+})=\int_M  \widehat{A}(TM,\nabla ^{TM})\mathrm{ch}(V, \nabla^V),$$ 
(see (1.17) for the definition of $\widehat{A}$).

On spin manifolds, there are divisibility properties for the signature and twisted signatures.  The famous Rokhlin theorem (\cite{Ro}) says that when $M$ is a 4-dimensional smooth closed spin manifold, $Sig(M)$ is divisible by 16. Ochanine (\cite{O1}) generalizes the Rokhlin congruence to higher dimensions by proving that when $M$ is an $8k+4$ dimensional smooth closed spin manifold, the signature $Sig(M)$ is divisible by 16. The Hirzebruch divisibilities (\cite{H2}, c.f. \cite{CH}) assert that when $M$ is an $8k+4$ dimensional smooth closed spin manifold, the twisted signature $Sig(M, \TT)$ is divisible by 256 while when $M$ is $8k$ dimensional, $Sig(M, \TT)$ is divisible by 2048. In \cite{CH}, the authors show that when $M$ is an $8k+4$ dimensional smooth closed spin manifold with $k \geq 1$, the twisted signature $Sig(M, \TT\otimes \TT)$ is divisible by 256 while when $M$ is $8k$ dimensional with $k \geq 2$, $Sig(M, \TT\otimes \TT)$ is divisible by 2048.

A spin manifold $M$ is called {\it string} if $\frac{p_1(M)}{2}=0$, where $\frac{p_1(M)}{2}$ is a degree 4 integral cohomology class determined by the spin structure of $M$, twice of which is equal to the first Pontryagin class $p_1(M)$.  On a $4k$ dimensional smooth closed string manifold $M$, the {\it Witten genus} (\cite{Wit4})
$$W(M):=\int_M \widehat{A}(TM)\mathrm{ch}\left(\bigotimes_{n=1}^{\infty }S_{q^{n}}(\widetilde{T_{%
\mathbf{C}}M})\right)$$
is a modular form of weight $2k$ over $SL(2, \ZZ)$ with integral Fourier expansion (\cite{Za}). See Section 1 for details. 24 is an interesting dimension for string manifolds. For example, the Hirzebruch prize question \cite{HBJ} asks for the existence of a 24 dimensional string manifold $M$ such that $\int_M\widehat{A}(M)=1, \int_M\widehat{A}(M)\mathrm{ch}(T_\CC M)=0$ (answered positively by Hopkins-Mahowald \cite{Ho}) and to find such a string manifold on which the Monster group acts by diffeomorphism (still open).

In this paper, we study 24 dimensional string manifolds and obtain the following mod 3 congruence of the twisted signature and the  index of Rarita-Schwinger operator by combining the modularity of the Witten genus and the modular forms constructed by Liu and Wang in \cite{LW}.

\begin{theorem}If M is a 24 dimensional smooth closed string manifold, then
\be Sig(M, \Lambda^2\mathcal{T}) \equiv Ind((D\otimes T_\CC M)^+)\ \ \ \ \mathrm{mod}\, 3\ZZ. \ee
\end{theorem}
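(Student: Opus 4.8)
The plan is to realize both sides of the congruence as $q$-expansion coefficients of weight $12$ modular forms and then to compare those forms after reduction modulo $3$ inside the ring of modular forms over $SL(2,\ZZ)$. First I would set up the two relevant generating series. On the Dirac side, $Ind((D\otimes T_\CC M)^+)=\int_M\widehat{A}(TM)\mathrm{ch}(T_\CC M)$ is, up to a multiple of $\widehat{A}(M)$, the coefficient of $q$ in the Witten genus $W(M)=\int_M\widehat{A}(TM)\mathrm{ch}(\bigotimes_{n\geq 1}S_{q^{n}}(\widetilde{T_\CC M}))$; the reduced-bundle correction is $24\,\widehat{A}(M)$, which already vanishes mod $3$. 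On the signature side, I would introduce the characteristic form $\{\widehat{L}(TM)\mathrm{ch}(\Theta_1(T_\CC M))\}$ attached to the Witten bundle $\Theta_1=\bigotimes_{n\geq 1}S_{q^{n}}(\widetilde{T_\CC M})\otimes\bigotimes_{m\geq 1}\Lambda_{q^{m}}(\widetilde{T_\CC M})$, whose low-order $q$-coefficients are the twisted signatures $Sig(M)$, $Sig(M,\TT)$, $Sig(M,\WW)$, $Sig(M,\TT\otimes\TT)$; the exterior power $\Lambda^2$ enters at order $q^2$ through the factor $\Lambda_{q}(\widetilde{T_\CC M})$.

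Next I would use the hypotheses to pin down these forms. Because $M$ is string, $W(M)$ is modular of weight $12$ over $SL(2,\ZZ)$ and therefore lies in the two-dimensional space spanned by $E_4^3$ and $\Delta$; matching its constant and linear terms gives $W(M)=\widehat{A}(M)E_4^3+(c_1-720\,\widehat{A}(M))\Delta$ with $c_1=[q^1]W(M)$. For the signature form I would invoke the modular forms of Liu and Wang: their construction, together with the string condition, relates the level-two series $\{\widehat{L}(TM)\mathrm{ch}(\Theta_1)\}\in M_{12}(\Gamma_0(2))$ to a Dirac/Witten-type series over $\Gamma^0(2)$ via the transformation $\tau\mapsto -1/\tau$, and yields an identity expressing the twisted signatures through the coefficients of $W(M)$ together with a fixed explicit family of level-one and level-two modular forms. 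Comparing $q$-coefficients in that identity produces an exact integral formula for $Sig(M,\WW)$ in terms of $Sig(M)$, $\widehat{A}(M)$, $c_1$, and lower twisted data.

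Finally I would reduce the resulting identity modulo $3$. The arithmetic engine is that $E_4=1+240\sum\sigma_3(n)q^n\equiv 1$ and $E_6\equiv 1\pmod 3$ (as $240,504\equiv 0$), so $E_4^3\equiv 1$ contributes nothing to the positive $q$-coefficients mod $3$, while $\Delta\equiv q\prod_{n\geq 1}(1-q^{3n})^{8}\pmod 3$ governs the surviving terms; the weight-two Hasse invariant $E_2\equiv 1\pmod 3$ further lets me compare the a priori different weights carried by the signature and Dirac sides. Tracking these reductions, the coefficients $720$ and $24$ drop out, the $E_4^3$-contributions die, and after reduction each side is congruent mod $3$ to a single $\Delta$-coefficient; the Liu–Wang modular identity identifies these two coefficients, forcing $Sig(M,\WW)\equiv Ind((D\otimes T_\CC M)^+)\pmod{3\ZZ}$.

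I expect the \emph{main obstacle} to be the bookkeeping in the middle step: isolating $Sig(M,\WW)$ cleanly from the genuinely level-two signature series requires peeling off the reduced-bundle constants and the contaminating lower twisted signatures — notably $Sig(M,\TT\otimes\TT)$, which enters at the same order $q^2$ as $Sig(M,\WW)$ — and then checking that each such extra term is annihilated by the mod-$3$ reduction rather than merely numerically. Verifying that the Liu–Wang identity reduces to exactly the two desired characteristic numbers, with all intermediate modular-form coefficients congruent to $0$ mod $3$, is the delicate heart of the argument.
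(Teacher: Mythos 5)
Your Dirac side and your mod-3 arithmetic are fine, but the signature side of your plan fails at its first step: the series
\begin{equation*}
\int_M\widehat{L}(TM)\,\mathrm{ch}\left(\bigotimes_{n=1}^{\infty}S_{q^n}(\widetilde{T_{\CC}M})\otimes\bigotimes_{m=1}^{\infty}\Lambda_{q^m}(\widetilde{T_{\CC}M})\right)
\end{equation*}
(which is exactly the Liu--Wang form $Q_1$ with $a=1$, $b=0$, $V=TM$) does \emph{not} have $Sig(M,\WW)$ among its $q$-coefficients. Both tensor factors contribute their second-order terms at $q^2$, and since $S^2E+\Lambda^2E=E\otimes E$ in $K$-theory, the coefficient of $q^2$ is
\begin{equation*}
2\,\widetilde{T_{\CC}M}\otimes\widetilde{T_{\CC}M}+2\,\widetilde{T_{\CC}M}
=2\,T_{\CC}M\otimes T_{\CC}M-94\,T_{\CC}M+1104,
\end{equation*}
so the $q^2$-coefficient of your form is $2\,Sig(M,\TT\otimes\TT)-94\,Sig(M,\TT)+1104\,Sig(M)$: only the full tensor square appears, never $\Lambda^2$ by itself, and the same $S^k$-versus-$\Lambda^k$ pairing recurs at every order because the product is symmetric under exchanging the $S$- and $\Lambda$-factors. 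Hence the difficulty is not, as you describe it, ``peeling off'' a contaminating $Sig(M,\TT\otimes\TT)$; it is that $Sig(M,\WW)$ is simply absent, and no mod-3 reduction of coefficients can create it. To extract it you would need $\int_M\widehat{L}(TM)\mathrm{ch}(S^2T_{\CC}M)$ from some other source, and neither the Witten genus (which only yields $\widehat{A}$-twisted numbers, e.g.\ $\int_M\widehat{A}\,\mathrm{ch}(S^2T_{\CC}M)\equiv-\int_M\widehat{A}\,\mathrm{ch}(T_{\CC}M)$ mod $3$) nor your single level-2 pair supplies any $\widehat{L}$-twisted invariant of $S^2$ or $\Lambda^2$ separately.

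The missing ingredient is the \emph{other} Liu--Wang pair, $a=0$, $b=1$, $V=TM$, which is precisely what the paper uses in its Lemma 2.2. There the $\widehat{L}$-twisted ($\Gamma^0(2)$) side is $\int_M\widehat{L}\,\mathrm{ch}\bigl(\bigotimes S_{q^n}(\widetilde{T_{\CC}M})\otimes\bigotimes\Lambda_{q^m}(\widetilde{T_{\CC}M})\otimes\bigotimes\Lambda_{q^{r-1/2}}(\widetilde{T_{\CC}M})\bigr)$, and the half-integral powers $\Lambda_{q^{r-1/2}}$ place $\Lambda^2T_{\CC}M$ \emph{alone} (as $B_2=\Lambda^2T_{\CC}M-22\,T_{\CC}M+252$) at order $q$, unaccompanied by $S^2T_{\CC}M$ --- this is the only point in the whole argument where a signature-type invariant of $\Lambda^2\TT$ is generated. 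The paper then chains three congruences: the Witten-genus relation for $S^2$ (its Lemma 2.1, essentially your Dirac-side step), the $(0,1)$ pair giving $\int\widehat{L}\,\mathrm{ch}(\Lambda^2T_{\CC}M-T_{\CC}M)\equiv\int\widehat{A}\,\mathrm{ch}(\Lambda^2T_{\CC}M-S^2T_{\CC}M+T_{\CC}M)$ (Lemma 2.2), and the $(1,0)$ pair --- your form --- used only to convert $\int\widehat{L}\,\mathrm{ch}(T_{\CC}M)$ into $\widehat{A}$-quantities (Lemma 2.3). Your setup recovers at most Lemmas 2.1 and 2.3; without a modular form whose $\widehat{L}$-side carries half-integral powers of $q$, the quantity $Sig(M,\WW)$ never enters any identity, and the proof cannot close.
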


Let $\Omega^{String}_{4k}$ be the string cobordism group in dimension $4k$ and $tmf$ be the theory of topological modular form developed by Hopkins and Miller (\cite{Ho}). Let $MF_{2k}^{\ZZ}(SL(2, \ZZ))$ be the space of modular forms of weight $2k$ over $SL(2, \ZZ)$ with integral Fourier expansion. The Witten genus $W$ is equal to the composition of the maps (\cite{Ho}):
$$\xymatrix@C=0.5cm{\Omega^{String}_{4k}\ar[r]^{\sigma\ \ }& tmf^{-4k}(pt)\ar[r]^{e\ \ \ \ }& MF_{2k}^{\ZZ}(SL(2, \ZZ))},$$ where $\sigma$ is the {\it refined Witten genus} and $e$ is the edge homomorphism in a spectral sequence. Hopkins and Mahowald (\cite{Ho}) show that $\sigma$ is surjective. For $i,l\geq 0, j=0,1$, define $ a_{i,j,l}= \left\{\begin{array}{ccc}
                                      1& & \ \ \ \ \  \ i>0, j=0\\
                                      2& & j=1\ \ \\
                                      24/\mathrm{gcd}(24,l)& & i,j=0
                                     \end{array}\right..
                                     $
Hopkins and Mahowald also show that the image of $e$ (and therefore the image of the Witten genus) has a basis given by monomials
\be a_{i,j,l}E_4(\tau)^iE_6(\tau)^j\Delta(\tau)^l,\ \ \ \ i,l\geq 0, j=0,1, \ee
where
\h
\begin{split}
&E_{4}(\tau)=1+240(q+9q^{2}+28q^{3}+\cdots),\\
&E_{6}(\tau)=1-504(q+33q^{2}+244q^{3}+\cdots)\\
\end{split}
\e
are the Eisenstein series and $\Delta(\tau)=q\prod_{n\geq 0}(1-q^n)^{24}$ is the modular discriminant (see Section 1.1). Their weights are 4,6, 12 respectively. In dimension 24, the image of the Witten genus is spanned by the monomials $E_4(\tau)^3$, $24\Delta(\tau)$ and since $\int_M \widehat{A}(M)\mathrm{ch}(T_\CC M)-24\int_M\widehat{A}(M)$ is the coefficient of $q$ in the expansion of the Witten genus,  one has $Ind((D\otimes T_\CC M)^+)=\int_M \widehat{A}(M)\mathrm{ch}(T_\CC M)$ is divisible by 24 (this observation is due to Teichner \cite{Te}).  Therefore, by Theorem 0.1, we have

\begin{corollary} If M is a 24 dimensional smooth closed string manifold, then
\be
3| \,  Sig(M, \Lambda^2\mathcal{T}).
\ee
\end{corollary}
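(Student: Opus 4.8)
The plan is to read the corollary off from Theorem 0.1 together with the divisibility recorded in the discussion above, the one extra observation being the trivial arithmetic fact that $3\mid 24$, so that divisibility by $24$ already kills the residue modulo $3$.

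First I would establish (or simply cite) that $Ind((D\otimes T_\CC M)^+)$ is divisible by $24$. Concretely, for a $24$ dimensional string manifold $M$ the Witten genus $W(M)$ is a weight $12$ modular form over $SL(2,\ZZ)$ with integral Fourier coefficients, and by the Hopkins--Mahowald description of the image of the Witten genus it is an integral combination of $E_4(\tau)^3$ and $24\Delta(\tau)$. Since the $q$-coefficients of $E_4(\tau)^3$ and $24\Delta(\tau)$ are $720$ and $24$ respectively, both divisible by $24$, the coefficient of $q$ in $W(M)$ is divisible by $24$. This coefficient equals $\int_M \widehat{A}(M)\mathrm{ch}(T_\CC M)-24\int_M\widehat{A}(M)$, and as the second term is obviously a multiple of $24$, we conclude that
$$Ind((D\otimes T_\CC M)^+)=\int_M \widehat{A}(M)\mathrm{ch}(T_\CC M)\equiv 0 \pmod{24},$$
so in particular $Ind((D\otimes T_\CC M)^+)\equiv 0 \pmod{3}$.

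Then I would apply Theorem 0.1, which asserts $Sig(M,\Lambda^2\mathcal{T})\equiv Ind((D\otimes T_\CC M)^+)$ mod $3\ZZ$. Feeding in the previous step gives $Sig(M,\Lambda^2\mathcal{T})\equiv 0$ mod $3\ZZ$, that is, $3\mid Sig(M,\Lambda^2\mathcal{T})$, as claimed.

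There is no real obstacle within the corollary itself: granting Theorem 0.1 and the divisibility by $24$, the deduction is a one-line arithmetic assembly. The genuine difficulty is entirely upstream, namely in proving Theorem 0.1 (which I expect to run through the modularity of the Witten genus and the Liu--Wang modular forms in order to compare the characteristic-class integrands of $Sig(M,\Lambda^2\mathcal{T})$ and of the Rarita--Schwinger index modulo $3$) and in the Hopkins--Mahowald/Teichner input giving divisibility by $24$. The only point to verify is that the relation in Theorem 0.1 is an honest congruence of integers modulo $3$, so that reducing the divisible-by-$24$ quantity transfers cleanly to the twisted signature; this holds because both sides are indices, hence integers.
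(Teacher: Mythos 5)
Your proposal is correct and is essentially the paper's own argument: the paper likewise deduces the corollary by combining Theorem 0.1 with the Teichner/Hopkins--Mahowald observation that $Ind((D\otimes T_\CC M)^+)$ is divisible by $24$ (since the $q$-coefficients of $E_4(\tau)^3$ and $24\Delta(\tau)$ are $720$ and $24$), hence by $3$. You even fill in the small arithmetic details the paper states only in passing, so there is nothing to add.
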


One naturally asks if the string condition is indispensable for the mod 3 divisibility in Corollary 0.1. We answer this question as follows. Let $B^8$ be such a  Bott manifold, which is 8 dimensional and spin with the $A$-hat genus $\widehat{A}(B^8)=1$, $Sig(B^8)=0$ (\cite{Lau}).  Let $\mathbf{H}P^2$ be a quarterionic projective plane. $B^8 \times \mathbf{H}P^2\times \mathbf{H}P^2$ is a 24 dimensional spin manifold but not string. In Section 3, we will show that
\be
3\nmid \,  Sig(B^8 \times \mathbf{H}P^2\times \mathbf{H}P^2, \WW)
\ee
and therefore the string condition is indispensable.

One can also show that the power of 3 can not be increased for the divisibility in Corollary 0.1.
Let $M_0^8$ be the 8 dimensional Milnor-Kervaire almost-parallelizable manifold. It is a string manifold. Consider the 24 dimensional string manifold $M_0^8\times M_0^8\times M_0^8$. In Section 3, we will show that
\be
3| \,  Sig(M_0^8\times M_0^8\times M_0^8, \WW)
\ee
but
\be 3^2\nmid \,  Sig(M_0^8\times M_0^8\times M_0^8, \WW).
\ee
We would like to point out that $M_0^8\times M_0^8\times M_0^8$ is an interesting 24 dimensional string manifold with $W(M_0^8\times M_0^8\times M_0^8)=-E_4(\tau)^3$. See Section 3 for details.

The paper is organized as follows. In Section 1, we review some basic knowledge of Jacobi theta functions, modular forms and
then review the Witten genus as well as the modular forms constructed by Liu and Wang. In Section 2, we prove Theorem 0.1 by combining modularity of the Witten genus and the Liu-Wang modular forms. The examples and computation are included in Section 3.

\section{Modular Forms and Characteristic Forms}

\subsection{Preliminary on the Jacobi theta functions and modular forms}  Let $$ SL_2(\mathbf{Z}):= \left\{\left.\left(\begin{array}{cc}
                                      a&b\\
                                      c&d
                                     \end{array}\right)\right|a,b,c,d\in\mathbf{Z},\ ad-bc=1
                                     \right\}
                                     $$
 as usual be the modular group. Let
$$S=\left(\begin{array}{cc}
      0&-1\\
      1&0
\end{array}\right), \ \ \  T=\left(\begin{array}{cc}
      1&1\\
      0&1
\end{array}\right)$$
be the two generators of $ SL_2(\mathbf{Z})$. Their actions on
$\mathbf{H}$, the upper half plane, are given by
$$ S:\tau\rightarrow-\frac{1}{\tau}, \ \ \ T:\tau\rightarrow\tau+1.$$

The four Jacobi theta functions are defined as follows (cf.
\cite{C}): \h \theta(v,\tau)=2q^{1/8}\sin(\pi v)
\prod_{j=1}^\infty\left[(1-q^j)(1-e^{2\pi \sqrt{-1}v}q^j)(1-e^{-2\pi
\sqrt{-1}v}q^j)\right]\ ,\e \h \theta_1(v,\tau)=2q^{1/8}\cos(\pi
v)
 \prod_{j=1}^\infty\left[(1-q^j)(1+e^{2\pi \sqrt{-1}v}q^j)
 (1+e^{-2\pi \sqrt{-1}v}q^j)\right]\ ,\e
\h \theta_2(v,\tau)=\prod_{j=1}^\infty\left[(1-q^j)
 (1-e^{2\pi \sqrt{-1}v}q^{j-1/2})(1-e^{-2\pi \sqrt{-1}v}q^{j-1/2})\right]\
 ,\e
\h \theta_3(v,\tau)=\prod_{j=1}^\infty\left[(1-q^j) (1+e^{2\pi
\sqrt{-1}v}q^{j-1/2})(1+e^{-2\pi \sqrt{-1}v}q^{j-1/2})\right], \e
where $q=e^{2\pi \sqrt{-1} \tau}$ with $\tau\in \mathbf{H}$.
They are holomorphic functions for $(v,\tau)\in \mathbf{C \times
H}$.

If we act on the theta-functions by $S$ and $T$, they obey
the following transformation laws (cf. \cite{C}), \be
\theta(v,\tau+1)=e^{\pi \sqrt{-1}\over 4}\theta(v,\tau),\ \ \
\theta\left(v,-{1}/{\tau}\right)={1\over\sqrt{-1}}\left({\tau\over
\sqrt{-1}}\right)^{1/2} e^{\pi\sqrt{-1}\tau v^2}\theta\left(\tau
v,\tau\right)\ ;\ee \be \theta_1(v,\tau+1)=e^{\pi \sqrt{-1}\over
4}\theta_1(v,\tau),\ \ \
\theta_1\left(v,-{1}/{\tau}\right)=\left({\tau\over
\sqrt{-1}}\right)^{1/2} e^{\pi\sqrt{-1}\tau v^2}\theta_2(\tau
v,\tau)\ ;\ee \be\theta_2(v,\tau+1)=\theta_3(v,\tau),\ \ \
\theta_2\left(v,-{1}/{\tau}\right)=\left({\tau\over
\sqrt{-1}}\right)^{1/2} e^{\pi\sqrt{-1}\tau v^2}\theta_1(\tau
v,\tau)\ ;\ee \be\theta_3(v,\tau+1)=\theta_2(v,\tau),\ \ \
\theta_3\left(v,-{1}/{\tau}\right)=\left({\tau\over
\sqrt{-1}}\right)^{1/2} e^{\pi\sqrt{-1}\tau v^2}\theta_3(\tau
v,\tau)\ .\ee

\begin{definition} Let $\Gamma$ be a subgroup of $SL_2(\mathbf{Z}).$ A modular form over $\Gamma$ is a holomorphic function $f(\tau)$ on $\mathbf{H}\cup
\{\infty\}$ such that for any
 $$ g=\left(\begin{array}{cc}
             a&b\\
             c&d
             \end{array}\right)\in\Gamma\ ,$$
 the following property holds
 $$f(g\tau):=f\left(\frac{a\tau+b}{c\tau+d}\right)=\chi(g)(c\tau+d)^lf(\tau), $$
 where $\chi:\Gamma\rightarrow\mathbf{C}^*$ is a character of
 $\Gamma$ and $l$ is called the weight of $f$.
 \end{definition}

Let
\be E_{2k}(\tau)=1-\frac{4k}{B_{2k}}\sum_{n=1}^{\infty}\left(\underset{d|n}{\sum}d^{2k-1}\right)q^n \ee
be the Eisenstein series, where $B_{2k}$ is the $2k$-th Bernoulli number. When $k>1$, $E_{2k}(\tau)$ is a modular form of weight $2k$ over $SL_2(\mathbf{Z})$. However, unlike other Eisenstein theories, $E_2(\tau)$ is not a modular form over $SL(2,\ZZ)$, instead $E_2(\tau)$ is a quasimodular form over $SL(2,\ZZ)$, satisfying:
\be E_2\left(\frac{a\tau+b}{c\tau+d}\right)=(c\tau+d)^2E_2(\tau)-\frac{6\ii c(c\tau+d)}{\pi}. \ee
In particular, we have
\be E_2(\tau+1)=E_2(\tau),\ee
\be E_2\left(-\frac{1}{\tau}\right)=\tau^2E_2(\tau)-\frac{6\ii\tau}{\pi}.\ee
For the precise definition of quasimodular forms, see \cite{KZ}.

Explicitly, we have
\be E_2(\tau)=1-24\sum_{n=1}^{\infty}\left(\underset{d|n}{\sum}d\right)q^n=1-24q-72q^2-96q^3-\cdots\ee
and
\be
\begin{split}
&E_{4}(\tau)=1+240(q+9q^{2}+28q^{3}+\cdots),\\
&E_{6}(\tau)=1-504(q+33q^{2}+244q^{3}+\cdots),\\
\end{split}
\ee

Let
\be
\Delta (\tau)=\frac{1}{1728}(E_4(\tau)^3-E_6(\tau)^2)=q\prod_{n\geq 0}(1-q^n)^{24}=q-24q^{2}+252q^{3}+\cdots
\ee
be the modular discriminant.

\begin{theorem}[\protect Tate] The ring of integral modular forms is
$$MF_*^\ZZ\cong \ZZ[E_4(\tau), E_6(\tau), \Delta(\tau)]/(E_4(\tau)^3-E_6(\tau)^2=1728\Delta(\tau)).$$

\end{theorem}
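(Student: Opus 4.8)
The plan is to deduce the integral statement from the classical structure theorem over $\C$ together with two integrality inputs. Write $\phi\colon\ZZ[X,Y,Z]\to MF_*^\ZZ$ for the graded ring homomorphism sending $X\mapsto E_4(\tau)$, $Y\mapsto E_6(\tau)$, $Z\mapsto\Delta(\tau)$, where $X,Y,Z$ carry weights $4,6,12$. One first checks that the three generators genuinely lie in $MF_*^\ZZ$: integrality of the Fourier coefficients of $E_4$ and $E_6$ is read off from their explicit $q$-expansions, while $\Delta(\tau)=q\prod_{n\geq 1}(1-q^n)^{24}$ manifestly has integral coefficients with leading coefficient $1$. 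The relation $E_4^3-E_6^2=1728\Delta$, which is the very definition of $\Delta$, shows that the ideal $I=(X^3-Y^2-1728Z)$ is contained in $\ker\phi$, so $\phi$ descends to $\bar\phi\colon A:=\ZZ[X,Y,Z]/I\to MF_*^\ZZ$. The goal is to prove that $\bar\phi$ is an isomorphism of graded rings.

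For injectivity I would argue that $A$ is torsion-free and that $\bar\phi\otimes\Q$ is injective. The polynomial $X^3-Y^2-1728Z$ is primitive and linear in $Z$, hence irreducible in the UFD $\ZZ[X,Y,Z]$; thus $I$ is a prime ideal, $A$ is an integral domain, and in particular $A$ is torsion-free. Over $\Q$ one may solve $Z=\tfrac{1}{1728}(X^3-Y^2)$, so $A\otimes\Q\cong\Q[X,Y]$, which maps isomorphically onto $\Q[E_4,E_6]=MF_*^\Q$; this is precisely the classical fact that $E_4,E_6$ are algebraically independent and generate all modular forms over $\C$, proved via the valence formula and the resulting dimension count for the weight-$k$ spaces. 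Since $A$ is torsion-free it embeds in $A\otimes\Q$, and injectivity of $\bar\phi\otimes\Q$ then forces $\bar\phi$ itself to be injective.

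For surjectivity I would induct on the weight $k$. For $k$ odd or $k=2$ the space of weight-$k$ modular forms vanishes, and in weight $0$ only the constant $1$ occurs; so assume $k\geq 4$ is even and write $k=4a+6b$ with $a,b\geq 0$, which is possible for every such $k$. Given an integral form $f$ of weight $k$ with constant Fourier coefficient $c_0\in\ZZ$, the monomial $g=E_4^aE_6^b$ has weight $k$ and constant term $1$, so $f-c_0g$ is an integral cusp form of weight $k$. The decisive integrality lemma is that $\Delta$ is invertible in $\ZZ((q))$: writing $\Delta=q\,u$ with $u\in 1+q\ZZ[[q]]$ a unit, any integral cusp form lies in $q\ZZ[[q]]$ and dividing it by $\Delta$ lands back in $\ZZ[[q]]$. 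Hence $f-c_0g=\Delta\,h$ with $h$ an integral modular form of weight $k-12$; by the induction hypothesis $h\in\ZZ[E_4,E_6,\Delta]$, so $f=c_0g+\Delta h$ lies in the image of $\phi$.

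I expect the main obstacle to be the integrality bookkeeping in the surjectivity step: one must know a priori that the complex modular form $h=(f-c_0g)/\Delta$ again has integral Fourier coefficients before the induction hypothesis can be invoked. This is exactly where the special shape of $\Delta$—its product expansion forcing a unit leading coefficient—is indispensable, and it is the one place where a naive ``reduce modulo the relation'' manipulation inside $\ZZ[X,Y,Z]$ fails, because $1728$ is not a unit in $\ZZ$. Everything else reduces to the classical theory over $\C$ together with the elementary commutative algebra of the irreducible relation.
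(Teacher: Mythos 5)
Your proposal is correct, but there is no proof in the paper to measure it against: the paper states Theorem 1.1 as a known result of Tate and uses it purely as input (it is invoked once, in the proof of Lemma 2.1, to write the weight-$12$ Witten genus as $mE_4(\tau)^3+n\Delta(\tau)$). Your argument is the standard proof of the integral structure theorem, and its skeleton is sound: the relation ideal $(X^3-Y^2-1728Z)$ is prime because the generator is primitive and linear in $Z$ over the UFD $\ZZ[X,Y]$, so $A$ is a torsion-free domain embedding in $A\otimes\Q\cong\Q[X,Y]$, and injectivity reduces to the algebraic independence of $E_4,E_6$ (note that for injectivity this independence alone suffices; the stronger statement $\Q[E_4,E_6]=MF_*^\Q$, while true, requires the additional remark that the monomials $E_4^aE_6^b$ of weight $k$ have $\Q$-rational, $\C$-linearly independent $q$-expansions, so the coefficient-extraction system is defined over $\Q$). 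Two points in the surjectivity induction deserve to be made explicit. First, to conclude that $h=(f-c_0E_4^aE_6^b)/\Delta$ is a modular form, and not merely an element of $\ZZ[[q]]$, you need $\Delta$ nonvanishing on the upper half-plane $\mathbf{H}$; this follows from the same product expansion $\Delta=q\prod_{n\geq 1}(1-q^n)^{24}$ you use for the unit-leading-coefficient lemma, since $|q|<1$ makes every factor nonzero (incidentally, the paper's formula with $\prod_{n\geq 0}$ is a typo, which you silently corrected). Second, the induction needs the base observation that modular forms of negative weight vanish, so that for $k=4,6,8,10$ one gets $h=0$ and $f=c_0E_4^aE_6^b$ outright. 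With these two remarks inserted, your proof is complete and is the classical argument (in the style of Deligne and Swinnerton-Dyer) for the theorem the paper cites without proof.
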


In the following, let's briefly review some level 2 modular forms. Let
$$ \Gamma_0(2)=\left\{\left.\left(\begin{array}{cc}
a&b\\
c&d
\end{array}\right)\in SL_2(\mathbf{Z})\right|c\equiv0\ \ (\rm mod \ \ 2)\right\},$$

$$ \Gamma^0(2)=\left\{\left.\left(\begin{array}{cc}
a&b\\
c&d
\end{array}\right)\in SL_2(\mathbf{Z})\right|b\equiv0\ \ (\rm mod \ \ 2)\right\}$$
be the two modular subgroups of $SL_2(\mathbf{Z})$. It is known
that the generators of $\Gamma_0(2)$ are $T,ST^2ST$ and the generators
of $\Gamma^0(2)$ are $STS,T^2STS$(c.f. \cite{C}).

Simply write
$\theta_j=\theta_j(0,\tau),\ 1\leq j \leq 3$. Define (cf. \cite{Liu1}),
$$ \delta_1(\tau)=\frac{1}{8}(\theta_2^4+\theta_3^4), \ \ \ \
\varepsilon_1(\tau)=\frac{1}{16}\theta_2^4 \theta_3^4\ ,$$
$$\delta_2(\tau)=-\frac{1}{8}(\theta_1^4+\theta_3^4), \ \ \ \
\varepsilon_2(\tau)=\frac{1}{16}\theta_1^4 \theta_3^4\ .$$

More explicitly, we have
\be
\delta _{1}(\tau )={\frac{1}{4}}+6\sum_{n=1}^\infty{\sum_{d|n, d\, \mathrm{odd}}}d\cdot q^n={\frac{1}{4}}+6q+6q^{2}+\cdots,
\ee

\be
\varepsilon_{1}(\tau )={\frac{1}{16}}+\sum_{n=1}^\infty{\sum_{d|n }}(-1)^d d^3\cdot q^n={\frac{1}{16}}-q+7q^{2}+\cdots \ ,
\label{Eqn: delta_1 and epsilon_1 expansion}
\ee

\be
\delta _{2}(\tau)=-{\frac{1}{8}}-3\sum_{n=1}^\infty{\sum_{d|n, d\, \mathrm{odd}}}d\cdot q^{\frac{n}{2}}=-{\frac{1}{8}}-3q^{1/2}-3q-12q^{3/2}+\cdots
\ee
and
\be
\varepsilon_{2}(\tau )=\sum_{n=1}^\infty{\sum_{d|n, n/d\, \mathrm{odd}}}d^3\cdot q^{\frac{n}{2}}=q^{1/2}+8q+28q^{3/2}\cdots \ ,
\label{Eqn: delta_2 and epsilon_2 expansion}
\end{equation}
where the \textquotedblleft $\cdots $" terms are the higher degree terms,
all of which have integral coefficients.

If $\Gamma$ is a modular subgroup, let
$M_*^\ZZ(\Gamma)$ denote the ring of modular forms
over $\Gamma$ with integral Fourier coefficients.
\begin{theorem} [\protect c.f. \cite{Liu1}] One has that $\delta_1(\tau)\ (resp.\ \varepsilon_1(\tau) ) $
is a modular form of weight $2 \ (resp.\ 4)$ over $\Gamma_0(2)$;
$\delta_2(\tau) \ (resp.\ \varepsilon_2(\tau))$ is a modular form
of weight $2\ (resp.\ 4)$ over $\Gamma^0(2)$ and moreover
$M_*^\ZZ(\Gamma^0(2))=\mathbf{Z}[8\delta_2(\tau),
\varepsilon_2(\tau)]$. Moreover, we have
transformation laws \be
\delta_2\left(-\frac{1}{\tau}\right)=\tau^2\delta_1(\tau),\ \ \ \ \
\ \ \ \ \
\varepsilon_2\left(-\frac{1}{\tau}\right)=\tau^4\varepsilon_1(\tau).\ee

\end{theorem}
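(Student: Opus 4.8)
The plan is to reduce everything to the $S$- and $T$-transformation behaviour of the three nonvanishing theta-constants $\theta_j=\theta_j(0,\tau)$, $1\le j\le 3$, which I read off from (1.1)--(1.4) by setting $v=0$ (where the Gaussian factor $e^{\pi\ii\tau v^2}$ becomes $1$). For the fourth powers this gives
$$\theta_1^4(\tau+1)=-\theta_1^4,\quad \theta_2^4(\tau+1)=\theta_3^4,\quad \theta_3^4(\tau+1)=\theta_2^4,$$
and, writing $\lambda=(\tau/\sqrt{-1})^{1/2}$ so that $\lambda^4=-\tau^2$,
$$\theta_1^4(-1/\tau)=-\tau^2\theta_2^4,\quad \theta_2^4(-1/\tau)=-\tau^2\theta_1^4,\quad \theta_3^4(-1/\tau)=-\tau^2\theta_3^4.$$
Thus $S$ and $T$ permute $\{\theta_1^4,\theta_2^4,\theta_3^4\}$ up to the weight factor $\tau^2$ and a sign, realising the standard surjection $SL_2(\ZZ)\to S_3$ with kernel $\Gamma(2)$: $T$ acts as the transposition $(\theta_2^4\,\theta_3^4)$ and $S$ as $(\theta_1^4\,\theta_2^4)$. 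The claimed transformation laws then fall out immediately, e.g. $\delta_2(-1/\tau)=-\tfrac18(-\tau^2\theta_2^4-\tau^2\theta_3^4)=\tau^2\delta_1(\tau)$ and $\varepsilon_2(-1/\tau)=\tfrac1{16}(-\tau^2\theta_2^4)(-\tau^2\theta_3^4)=\tau^4\varepsilon_1(\tau)$.

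For the modularity assertions I would test invariance on generators. Since $\Gamma_0(2)=\langle T, ST^2ST\rangle$ maps into $S_3$ with image the transposition $(\theta_2^4\,\theta_3^4)$ (indeed $ST^2ST\mapsto(12)(12)(23)=(23)$), the symmetric combinations $\theta_2^4+\theta_3^4$ and $\theta_2^4\theta_3^4$ are fixed by this image; each generator involves an even number of $S$'s, so the signs coming from $\lambda^4=-\tau^2$ cancel and the automorphy factors assemble into $(c\tau+d)^2$, resp. $(c\tau+d)^4$, with trivial character. Hence $\delta_1=\tfrac18(\theta_2^4+\theta_3^4)$ and $\varepsilon_1=\tfrac1{16}\theta_2^4\theta_3^4$ are modular of weights $2$ and $4$ over $\Gamma_0(2)$. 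Conjugating by $S$ replaces $(23)$ by $(13)$, and the same argument applied to $\Gamma^0(2)=\langle STS, T^2STS\rangle$ (whose $S_3$-image is $(\theta_1^4\,\theta_3^4)$) shows that $\delta_2$ and $\varepsilon_2$ are modular of weights $2$ and $4$ over $\Gamma^0(2)$. Holomorphy at $\infty$ is visible from the integral expansions (1.12)--(1.15), while holomorphy at the remaining cusp follows by transporting it through the relations $\delta_2(-1/\tau)=\tau^2\delta_1(\tau)$, $\varepsilon_2(-1/\tau)=\tau^4\varepsilon_1(\tau)$ to holomorphy of the partner form at $\infty$.

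It remains to prove the integral ring statement $M_*^\ZZ(\Gamma^0(2))=\ZZ[8\delta_2,\varepsilon_2]$, and this is where the real work lies. Over $\C$ the graded ring $M_*(\Gamma^0(2))$ is the free polynomial algebra $\C[\delta_2,\varepsilon_2]$ — the group has genus $0$, two cusps and one elliptic point of order $2$, so a valence/dimension computation gives $\dim M_{2k}(\Gamma^0(2))=\lfloor k/2\rfloor+1$, matching exactly the number of monomials $(8\delta_2)^{k-2j}\varepsilon_2^{\,j}$, $0\le j\le\lfloor k/2\rfloor$. The integral refinement is then a triangularity argument on the $q^{1/2}$-expansions at $\infty$ (the cusp has width $2$, since $T\notin\Gamma^0(2)$ but $T^2\in\Gamma^0(2)$): from (1.14)--(1.15), $8\delta_2=-1-24q^{1/2}-\cdots$ has constant term the unit $-1$, while $\varepsilon_2=q^{1/2}+8q+\cdots$ begins with the unit coefficient $1$ at order $q^{1/2}$. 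Hence $(8\delta_2)^{k-2j}\varepsilon_2^{\,j}$ has leading term $(-1)^k q^{j/2}$, so the transition matrix to the powers $q^0,q^{1/2},q,\dots$ is triangular with diagonal entries $\pm1$; given any $f\in M_{2k}^\ZZ(\Gamma^0(2))$, solving recursively for its coefficients against this basis keeps them integral, yielding $M_{2k}^\ZZ(\Gamma^0(2))=\bigoplus_j\ZZ\,(8\delta_2)^{k-2j}\varepsilon_2^{\,j}$. The factor $8$ in $8\delta_2$ is exactly what clears the constant term $-\tfrac18$ of $\delta_2$ and produces a primitive integral generator; establishing the complex polynomial structure with the correct dimension count and then controlling the denominators through this leading-coefficient analysis is the main obstacle.
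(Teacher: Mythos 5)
Your proposal cannot be matched against an argument ``in the paper,'' because the paper gives none: Theorem 1.2 is stated with the citation \cite{Liu1} and is used as a black box; the authors never prove the modularity of $\delta_i,\varepsilon_i$, the transformation laws, or the ring identity $M_*^\ZZ(\Gamma^0(2))=\ZZ[8\delta_2(\tau),\varepsilon_2(\tau)]$. What you have written is therefore a self-contained substitute for the external reference, and it is, in outline, correct and is essentially the standard argument (the one found in \cite{Liu1} and in \cite{HBJ}): the $S$- and $T$-laws for $\theta_j^4$ at $v=0$ (with the key sign $\lambda^4=-\tau^2$) immediately give $\delta_2(-1/\tau)=\tau^2\delta_1(\tau)$ and $\varepsilon_2(-1/\tau)=\tau^4\varepsilon_1(\tau)$; modularity is checked on the listed generators of $\Gamma_0(2)$ and $\Gamma^0(2)$; holomorphy at the second cusp is transported through $S$; and the integral ring statement follows from the dimension count $\dim M_{2k}(\Gamma^0(2))=\lfloor k/2\rfloor+1$ over $\C$ together with the triangularity of the $q^{1/2}$-expansions of $(8\delta_2)^{k-2j}\varepsilon_2^{\,j}$, whose leading coefficients are units. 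Your expansions, leading terms, and the recursion that keeps the coefficients integral are all right, and the triangularity also supplies the needed linear independence of the monomials.

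One step should be tightened: the sign bookkeeping in the generator check. You argue that ``each generator involves an even number of $S$'s, so the signs coming from $\lambda^4=-\tau^2$ cancel,'' but signs enter not only through $S$; they also enter through $T$ acting on $\theta_1^4$ (namely $\theta_1^4\mapsto-\theta_1^4$), and the intermediate images under a word in $S,T$ do pass through $\theta_1^4$. For example, under the slash action of $ST^2ST$ one has the chain
\begin{equation*}
\theta_2^4\ \longmapsto\ -\theta_1^4\ \longmapsto\ \theta_1^4\ \longmapsto\ -\theta_1^4\ \longmapsto\ \theta_2^4\ \longmapsto\ \theta_3^4,
\end{equation*}
so the net sign is $+1$ only because the $T$-signs also occur in pairs here; that is a computation, not a consequence of the parity of $S$'s alone. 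Tracing both generators on $\theta_2^4,\theta_3^4$ (for $\Gamma_0(2)$) and on $\theta_1^4,\theta_3^4$ (for $\Gamma^0(2)$) confirms that all signs cancel, so your conclusion stands; just replace the heuristic by the explicit composition.
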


\subsection{Modular characteristic forms}
Let $M$ be a $4k$ dimensional smooth Riemannian manifold. Let $\nabla ^{TM}$ be the
associated Levi-Civita connection on $TM$ and $R^{TM}=(\nabla ^{TM})^{2}$ be
the curvature of $\nabla ^{TM}$. $\nabla ^{TM}$ extends canonically to a
Hermitian connection $\nabla ^{T_{\mathbf{C}}M}$ on $T_{\mathbf{C}%
}M=TM\otimes \mathbf{C}$.

Let $\widehat{A}(TM,\nabla ^{TM})$ and $\widehat{L}(TM,\nabla ^{TM})$ be the
Hirzebruch characteristic forms defined respectively by (cf. \cite{Z})
\begin{equation}
\begin{split}
& \widehat{A}(TM,\nabla ^{TM})={\det }^{1/2}\left( {\frac{{\frac{\sqrt{-1}}{%
4\pi }}R^{TM}}{\sinh \left( {\frac{\sqrt{-1}}{4\pi }}R^{TM}\right) }}\right)
, \\
& \widehat{L}(TM,\nabla ^{TM})={\det }^{1/2}\left( {\frac{{\frac{\sqrt{-1}}{%
2\pi }}R^{TM}}{\tanh \left( {\frac{\sqrt{-1}}{4\pi }}R^{TM}\right) }}\right)
.
\end{split}%
\end{equation}
Note that $\widehat{L}(TM,\nabla ^{TM})$ defined here is different from the classical Hirzebruch $L$-form defined by
\begin{equation*}
L(TM,\nabla ^{TM})={\det }^{1/2}\left( {\frac{{\frac{\sqrt{-1}}{%
2\pi }}R^{TM}}{\tanh \left( {\frac{\sqrt{-1}}{2\pi }}R^{TM}\right) }}\right).
\end{equation*} However they give same top (degree $4k$) forms and therefore when $M$ is oriented
$$\int_M\widehat{L}(TM,\nabla ^{TM}) =\int_M L(TM,\nabla ^{TM}).$$ 

Let $E$, $F$ be two Hermitian vector bundles over $M$ carrying Hermitian
connections $\nabla ^{E}$, $\nabla ^{F}$ respectively. Let $R^{E}=(\nabla
^{E})^{2}$ (resp. $R^{F}=(\nabla ^{F})^{2}$) be the curvature of $\nabla
^{E} $ (resp. $\nabla ^{F}$). If we set the formal difference $G=E-F$, then $%
G$ carries an induced Hermitian connection $\nabla ^{G}$ in an obvious
sense. We define the associated Chern character form as (cf.\cite{Z})
\begin{equation}
\mathrm{ch}(G,\nabla ^{G})=\mathrm{tr}\left[ \exp \left( {\frac{\sqrt{-1}}{%
2\pi }}R^{E}\right) \right] -\mathrm{tr}\left[ \exp \left( {\frac{\sqrt{-1}}{%
2\pi }}R^{F}\right) \right] .
\end{equation}
Let $\mathrm{ch}(G,\nabla ^{G})=\sum_{i=0}^{2k}\mathrm{ch}^{i}(G,\nabla ^{G})$ such that $\mathrm{ch}^{i}(G,\nabla ^{G})$ is the degree $2i$ component. Define 
$$\mathrm{ch}_2(G,\nabla ^{G})=\sum_{i=0}^{2k}2^i\mathrm{ch}^{i}(G,\nabla ^{G}).$$
It's not hard to see that 
$$\int_M \widehat{L}(TM,\nabla ^{TM}) \mathrm{ch}(E,\nabla ^{E})=\int_M L(TM,\nabla ^{TM}) \mathrm{ch}_2(E,\nabla ^{E}).$$ Note that in the book \cite{Law} (Theorem 13.9), the following formula is given
$$Sig(M, E)=\int_M L(TM,\nabla ^{TM}) \mathrm{ch}_2(E,\nabla ^{E}).$$ Here we use $\int_M \widehat{L}(TM,\nabla ^{TM}) \mathrm{ch}(E,\nabla ^{E})$ to avoid $\mathrm{ch}_2$.  (However we would like to point out that our $\widehat{L}$ is different from the $\widehat{L}$ in \cite{Law}. )

By the Chern-Weil theory, the cohomology classes represented by the characteristic forms defined above are independent of choice of connections. 
In the rest of this chapter, we simply write characteristic forms without writing connections.

For any complex number $t$, let
\begin{equation*}
S_{t}(E)={\mathbf{C}}|_{M}+tE+t^{2}S^{2}(E)+\cdots ,\ \Lambda _{t}(E)={%
\mathbf{\ C}}|_{M}+tE+t^{2}\Lambda ^{2}(E)+\cdots
\end{equation*}%
denote respectively the total symmetric and exterior powers of $E$, which
lie in $K(M)[[t]].$ The following relations between these two operations
\cite{A} hold,
\begin{equation}
S_{t}(E)=\frac{1}{\Lambda _{-t}(E)},\ \ \ \ \Lambda _{t}(E-F)=\frac{\Lambda
_{t}(E)}{\Lambda _{t}(F)}.  \label{symmetric and exterior powers relation}
\end{equation}%

Let  $\{\omega _{i}\}$, $\{{%
\omega _{j}}^{\prime }\}$ are formal Chern roots for Hermitian vector
bundles $E$, $F$ respectively, then \cite{H1}
\begin{equation}
\mathrm{ch}\left( \Lambda _{t}{(E)}\right)
=\prod\limits_{i}(1+e^{\omega _{i}}t).
\end{equation}%
Therefore, we have the following formulas for Chern character forms,
\begin{equation}
\mathrm{ch}\left( S_{t}(E)\right) =\frac{1}{\mathrm{ch}%
\left( \Lambda _{-t}(E)\right) }=\frac{1}{%
\prod\limits_{i}(1-e^{\omega _{i}}t)}\ ,
\end{equation}%
\begin{equation}
\mathrm{ch}\left( \Lambda _{t}(E-F)\right) =%
\frac{\mathrm{ch}\left( \Lambda _{t}(E)\right) }{%
\mathrm{ch}\left( \Lambda _{t}(F)\right) }=\frac{%
\prod\limits_{i}(1+e^{\omega _{i}}t)}{\prod\limits_{j}(1+e^{{\omega _{j}}%
^{\prime }}t)}\ .
\end{equation}%

If $W$ is a real Euclidean vector bundle over $M$ carrying a Euclidean
connection $\nabla ^{W}$, then its complexification $W_{\mathbf{C}}=W\otimes
\mathbf{C}$ is a complex vector bundle over $M$ carrying a canonically
induced Hermitian metric from that of $W$, as well as a Hermitian connection
$\nabla ^{W_{\mathbf{C}}}$ induced from $\nabla ^{W}$. If $E$ is a complex
vector bundle over $M$, set $\widetilde{E}=E-\mathbf{C}^{\mathrm{rk}(E)}$ in
$K(M)$.

Set
\begin{equation}
\Theta (T_{\mathbf{C}}M)=\bigotimes_{n=1}^{\infty }S_{q^{n}}(\widetilde{T_{%
\mathbf{C}}M}).
\end{equation}
$\Theta (T_{\mathbf{C}}M)$ carries the induced
connection from  $\nabla ^{T_\CC M}.$

When $M$ is a closed string manifold, the {\it Witten genus} (\cite{Wit4})
$$W(M):=\int_M \widehat{A}(TM)\mathrm{ch}(\Theta (T_{\mathbf{C}}M))$$
is a modular form of weight $2k$ over $SL(2, \ZZ)$ with integral Fourier expansion (\cite{Za}).

Let $V$ be a $2l$ dimensional real Euclidean vector bundle over $M$ carrying a Euclidean
connection. Let $a,b$ be two integers. Liu and Wang introduce the following elements (\cite{LW}) in $K(M)[[q^{\frac{1}{2}}]]$ which consist of formal power series in $q^{\frac{1}{2}}$
with coefficients in the $K$-group of $M$,

\begin{equation}
\begin{split}
&\Theta _{1}(T_{\mathbf{C}}M,V_{\mathbf{C}},a,b)\\
=&\bigotimes_{n=1}^{\infty
}S_{q^{n}}(\widetilde{T_{\mathbf{C}}M})\otimes \left(
\bigotimes_{m=1}^{\infty }\Lambda _{q^{m}}(\widetilde{V_{\mathbf{C}}}%
)\right) ^{a}\otimes \left( \bigotimes_{r=1}^{\infty }\Lambda _{q^{r-{\frac{1%
}{2}}}}(\widetilde{V_{\mathbf{C}}})\right) ^{b}\otimes \left(
\bigotimes_{s=1}^{\infty }\Lambda _{-q^{s-{\frac{1}{2}}}}(\widetilde{V_{%
\mathbf{C}}})\right) ^{b},
\end{split}
\end{equation}

\begin{equation}
\begin{split}
&\Theta _{2}(T_{\mathbf{C}}M,V_{\mathbf{C}},a,b)\\
=&\bigotimes_{n=1}^{\infty
}S_{q^{n}}(\widetilde{T_{\mathbf{C}}M})\otimes \left(
\bigotimes_{m=1}^{\infty }\Lambda _{q^{m}}(\widetilde{V_{\mathbf{C}}}%
)\right) ^{b}\otimes \left( \bigotimes_{r=1}^{\infty }\Lambda _{q^{r-{\frac{1%
}{2}}}}(\widetilde{V_{\mathbf{C}}})\right) ^{b}\otimes \left(
\bigotimes_{s=1}^{\infty }\Lambda _{-q^{s-{\frac{1}{2}}}}(\widetilde{V_{%
\mathbf{C}}})\right) ^{a}.
\end{split}
\end{equation}
$\Theta _{i}(T_{\mathbf{C}}M,V_{\mathbf{C}},a,b), i=1, 2$ carry the induced connections from $\nabla^{T_\CC M}$ and $\nabla^{V_\CC}.$

 Now assume $V$ is spin and denote the spinor bundle of $V$ by $\Delta (V)$, which carries the induced connection from  $\nabla^{V_\CC}.$

Let $p_1(TM)$ and $p_1(V)$ be the first Pontrjagin forms of $TM$ and $V$ respectively.

If $\omega $ is a differential form on $M$, we denote by $\omega ^{(i)}$ its
degree $i$ component.

Set (\cite{HLZ, LW})
\begin{equation}
\begin{split}
&Q_{1}(T_{\mathbf{C}}M,V_{\mathbf{C}},a,b, \tau)\\
=&\left\{ e^{\frac{1}{24}E_{2}(\tau )[p_{1}(TM)-(a+2b)p_{1}(V)]}%
\widehat{A}(TM)\mathrm{ch}((\Delta (V))^{a})\mathrm{ch}(\Theta
_{1}(T_{\mathbf{C}}M,V_{\mathbf{C}},a,b))\right\}^{(4k)};
\end{split}
\end{equation}

\begin{equation}
Q_{2}(T_{\mathbf{C}}M,V_{\mathbf{C}},a,b, \tau)\\
=\left\{ \widehat{A}(TM)\mathrm{ch}((\Delta
(V))^{b})\mathrm{ch}(\Theta _{2}(T_{\mathbf{C}}M,V_{\mathbf{C}%
},a,b))\right\} ^{(4k)}
,
\end{equation}

\begin{equation}
\begin{split}
&\overline{Q_{2}}(T_{\mathbf{C}}M,V_{\mathbf{C}},a,b, \tau)\\
=&\left\{ \frac{e^{\frac{1}{24}E_{2}(\tau
)[p_{1}(TM)-(a+2b)p_{1}(V)]}-1}{p_{1}(TM)-(a+2b)p_{1}(V)}\right.\\
&\ \ \ \ \cdot \left.\widehat{A}
(TM)\mathrm{ch}((\Delta (V))^{b})\mathrm{ch}(\Theta _{2}(T_{%
\mathbf{C}}M,V_{\mathbf{C}},a,b))\right\} ^{(4k-4)}.
\end{split}
\end{equation}

Liu and Wang prove following the theorem in \cite{LW},

\begin{theorem}[\protect\cite{LW}]
\label{Liu-Wang Thm}$\int_M Q_{1}(T_{\mathbf{C}}M,V_{\mathbf{C}},a,b, \tau)$ is a modular form weight $2k$ over $\Gamma _{0}(2)$, while $\int_M \{Q_{2}(T_{\mathbf{C}}M,V_{\mathbf{C}},a,b, \tau)+[p_{1}(TM)-(a+2b)p_{1}(V)]\overline{
Q_{2}}(T_{\mathbf{C}}M,V_{\mathbf{C}},a,b, \tau)\}$ is a modular form weight $2k$ over $\Gamma ^{0}(2)$. Moreover, the following identity holds,%
\begin{equation}
\begin{split}
&\int_M Q_{1}\left(T_{\mathbf{C}}M,V_{\mathbf{C}},a,b, -\frac{1}{\tau }\right)\\
=&2^{(a-b)l}\tau ^{2k}\int_M \left\{Q_{2}(T_{\mathbf{C}}M,V_{\mathbf{C}},a,b, \tau)+[p_{1}(TM)-(a+2b)p_{1}(V)]\overline{Q_{2}}(T_{\mathbf{C}}M,V_{\mathbf{C}},a,b, \tau)\right\}.\\
\end{split}
\end{equation}
\end{theorem}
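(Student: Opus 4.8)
The plan is to follow the modular-invariance method of Liu: translate every Chern character occurring in $Q_1$ and $Q_2$ into Jacobi theta functions of the Chern roots, so that both integrands become top-degree components of explicit theta products multiplied by the quasimodular exponential $e^{\frac{1}{24}E_2(\tau)P_1}$, where $P_1=p_1(TM)-(a+2b)p_1(V)$; the modularity statements and the $S$-transformation law then drop out of the $S$- and $T$-transformation laws of $\theta,\theta_1,\theta_2,\theta_3$, \emph{provided} the quasimodular anomaly of $E_2$ exactly cancels the anomaly produced by the thetas. First I would record the bookkeeping identity $\tfrac{e^{\frac{1}{24}E_2P_1}-1}{P_1}\cdot P_1=e^{\frac{1}{24}E_2P_1}-1$, which gives
\[
\int_M\{Q_{2}+P_1\overline{Q_{2}}\}=\int_M\Big\{e^{\frac{1}{24}E_2(\tau)P_1}\,\widehat{A}(TM)\,\mathrm{ch}((\Delta(V))^{b})\,\mathrm{ch}(\Theta_{2})\Big\}^{(4k)},
\]
so that both objects have the uniform shape ``top component of $e^{\frac{1}{24}E_2P_1}$ times $\widehat{A}$ times a spinor factor times an infinite theta-type tensor product.'' Introducing formal Chern roots $\{\pm x_j\}_{j=1}^{2k}$ of $T_{\mathbf{C}}M$ and $\{\pm y_m\}_{m=1}^{l}$ of $V_{\mathbf{C}}$, I would invoke the standard dictionary (as in \cite{HLZ, LW})
\[
\widehat{A}(TM)\,\mathrm{ch}\Big(\bigotimes_{n\geq1}S_{q^{n}}(\widetilde{T_{\mathbf{C}}M})\Big)=\prod_{j}\frac{x_j\,\theta'(0,\tau)}{\theta(x_j,\tau)},\qquad
\mathrm{ch}(\Delta(V))\,\mathrm{ch}\Big(\bigotimes_{m\geq1}\Lambda_{q^{m}}(\widetilde{V_{\mathbf{C}}})\Big)=2^{l}\prod_{m}\frac{\theta_{1}(y_m,\tau)}{\theta_{1}(0,\tau)},
\]
together with $\mathrm{ch}(\bigotimes_{r}\Lambda_{q^{r-1/2}}(\widetilde{V_{\mathbf{C}}}))=\prod_m\theta_{3}(y_m,\tau)/\theta_{3}(0,\tau)$ and $\mathrm{ch}(\bigotimes_{s}\Lambda_{-q^{s-1/2}}(\widetilde{V_{\mathbf{C}}}))=\prod_m\theta_{2}(y_m,\tau)/\theta_{2}(0,\tau)$. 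After substitution, $Q_1$ is the degree-$4k$ part of $2^{al}e^{\frac{1}{24}E_2P_1}\prod_j\frac{x_j\theta'(0)}{\theta(x_j)}\prod_m(\frac{\theta_1(y_m)}{\theta_1(0)})^{a}(\frac{\theta_2(y_m)}{\theta_2(0)})^{b}(\frac{\theta_3(y_m)}{\theta_3(0)})^{b}$, and the $Q_2$-combination the same with $V$-exponents $(\theta_1)^{b}(\theta_2)^{a}(\theta_3)^{b}$. Here the spinor factor $(\Delta(V))^{a}$ contributes the constant $2^{al}$ (and $(\Delta(V))^{b}$ contributes $2^{bl}$), and $\sum_j x_j^2$, $\sum_m y_m^2$ represent $p_1(TM)$, $p_1(V)$.

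The $T$-invariance of $\int_M Q_1$ is then immediate: the $e^{\pi\ii/4}$ phases of $\theta,\theta_1$ cancel in the ratios, $E_2(\tau+1)=E_2(\tau)$, and the interchange $\theta_2\leftrightarrow\theta_3$ is harmless because $\theta_2,\theta_3$ enter $Q_1$ with the \emph{same} exponent $b$. (The $Q_2$-combination is only $T^2$-invariant, since there $\theta_2,\theta_3$ carry unequal exponents $a,b$; this is exactly why it lives over $\Gamma^0(2)$ rather than $\Gamma_0(2)$.) For the $S$-transformation I would substitute $\tau\mapsto-1/\tau$: the theta $S$-laws send $\theta_1\leftrightarrow\theta_2$ and fix $\theta_3$, so the $V$-part of $Q_1$ is carried precisely to that of the $Q_2$-combination; the derivative $\theta'(0,-1/\tau)$ together with the argument rescaling $v\mapsto\tau v$ converts the whole product into the same expression evaluated at the rescaled roots $\tau x_j,\tau y_m$, so that extracting the degree-$4k$ component multiplies by $\tau^{2k}$; the mismatch $2^{al}$ versus $2^{bl}$ produces $2^{(a-b)l}$; and each theta leaves behind an anomalous factor $e^{\pi\ii\tau v^2}$ at its argument.

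The crux is the cancellation of this anomaly. Collecting the $e^{\pi\ii\tau v^2}$ factors over all roots, the $2k$ factors from $\theta(x_j,\cdot)$ give $e^{-\pi\ii\tau\sum_j x_j^2}$, while the $V$-thetas, appearing with total multiplicity $a+b+b$, give $e^{+\pi\ii\tau(a+2b)\sum_m y_m^2}$; together these are $e^{-\pi\ii\tau\kappa P_1}$ for the normalization constant $\kappa$ relating $\sum x^2,\sum y^2$ to $p_1(TM),p_1(V)$, and this is precisely where the combination $a+2b$, hence $P_1$, is forced. On the other side, $E_2(-1/\tau)=\tau^2E_2(\tau)-\tfrac{6\ii\tau}{\pi}$ splits the exponential as $e^{\frac{1}{24}\tau^2E_2P_1}\cdot e^{-\frac{\ii\tau}{4\pi}P_1}$: since the degree-$4$ form $P_1$ scales as $\tau^2$ under the rescaling of the roots, the first factor becomes the clean exponential $e^{\frac{1}{24}E_2P_1}$ of the $Q_2$-combination in the rescaled roots, while the coefficient $\tfrac{1}{24}$ is calibrated so that the residual factor $e^{-\frac{\ii\tau}{4\pi}P_1}$ exactly annihilates the theta anomaly $e^{-\pi\ii\tau\kappa P_1}$. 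Verifying this cancellation — keeping precise track of the powers of $2\pi$, the signs, and the identifications $\sum x^2,\sum y^2\leftrightarrow p_1$ — is the main obstacle, and it is the sole place where the $E_2$ correction is needed. Granting it, one obtains exactly $\int_M Q_1(-1/\tau)=2^{(a-b)l}\tau^{2k}\int_M\{Q_2+P_1\overline{Q_2}\}$.

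Finally I would assemble the modularity. Invariance of $\int_M Q_1$ under the two generators $T$ and $ST^2ST$ of $\Gamma_0(2)$ follows from the established $T$-invariance and from composing the $S$- and $T$-laws along the word $ST^2ST$, the $E_2$-correction guaranteeing that no anomalous character survives; holomorphicity at the cusps is read off from the (integral) $q$-expansions, so $\int_M Q_1$ is a genuine modular form of weight $2k$ over $\Gamma_0(2)$. For the $Q_2$-combination I would use $S\Gamma_0(2)S^{-1}=\Gamma^0(2)$: writing any $h\in\Gamma^0(2)$ as $SgS^{-1}$ with $g\in\Gamma_0(2)$ and feeding the $S$-transformation law just derived together with the $\Gamma_0(2)$-modularity of $\int_M Q_1$ yields weight-$2k$ modularity of $\int_M\{Q_2+P_1\overline{Q_2}\}$ over $\Gamma^0(2)$, completing the proof.
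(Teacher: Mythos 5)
The paper itself does not prove this theorem---it is quoted directly from Liu--Wang \cite{LW}---so the only meaningful comparison is with the cited proof, and your proposal reconstructs exactly that argument: the theta-function dictionary for $\widehat{A}$, $\mathrm{ch}(\Delta(V))$ and the $\Theta_i$'s, the transformation of the quasimodular factor producing the residual $e^{-\frac{\ii\tau}{4\pi}P_1}$ that cancels the accumulated theta anomaly $e^{\pi\ii\tau\left(\sum_j x_j^2-(a+2b)\sum_m y_m^2\right)}$ (which indeed works out under the standard normalization $p_1=-4\pi^2\sum_j x_j^2$, i.e.\ Chern roots $\pm 2\pi\ii x_j$, and is precisely where $a+2b$ is forced), the factor $2^{(a-b)l}$ from the spinor constants, and modularity via the generators $T$, $ST^2ST$ of $\Gamma_0(2)$ together with conjugation $S\Gamma_0(2)S^{-1}=\Gamma^0(2)$. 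Your outline is correct and is essentially the same approach as the cited source; the one step you defer (``granting'' the anomaly cancellation) is routine Chern--Weil bookkeeping that goes through exactly as you describe.
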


\section{Proof of Theorem 0.1}

In this section, we give the proof of Theorem 0.1 by combining the modularity of the Witten genus and the Liu-Wang modular forms.

Let $M$ be a $24$ dimensional smooth closed string
manifold.

\begin{lemma}

\begin{equation}
\int_M\widehat{A}(TM)\mathrm{ch}(S^{2}T_{\mathbf{C}}M)
=\int_M \widehat{A}(TM)\mathrm{ch}(-T_{\mathbf{C}}M+196884) ,
\end{equation}
and therefore
\begin{equation}
\int_M\widehat{ A}(TM)\mathrm{ch}(S^{2}T_{\mathbf{C}}M)\equiv -\int_M \widehat{A}(TM)\mathrm{ch}(T_{\mathbf{C}}M) \ \  \mathrm{mod \, 3\ZZ} .
\end{equation}

\end{lemma}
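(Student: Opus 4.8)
The plan is to read off the desired identity from the modularity of the Witten genus $W(M)=\int_M\widehat{A}(TM)\,\mathrm{ch}(\Theta(T_{\mathbf{C}}M))$ by comparing its first three Fourier coefficients. Since $\dim M=24$ and $M$ is string, $W(M)$ is a modular form of weight $12$ over $SL(2,\mathbf{Z})$, and the space of such forms is two dimensional, spanned by $E_4(\tau)^3$ and $\Delta(\tau)$. Writing $W(M)=\sum_{k\ge 0}w_kq^k$, I would express $w_0,w_1,w_2$ in terms of the three characteristic numbers $I_0=\int_M\widehat{A}(TM)$, $I_1=\int_M\widehat{A}(TM)\mathrm{ch}(T_{\mathbf{C}}M)$ and $I_2=\int_M\widehat{A}(TM)\mathrm{ch}(S^2T_{\mathbf{C}}M)$, and then use the two dimensional constraint to produce a single linear relation among $I_0,I_1,I_2$, which is exactly the asserted equality.

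First I would expand $\Theta(T_{\mathbf{C}}M)=\bigotimes_{n\ge 1}S_{q^n}(\widetilde{T_{\mathbf{C}}M})$ to order $q^2$. Using $S_{q^n}(\widetilde{T_{\mathbf{C}}M})=S_{q^n}(T_{\mathbf{C}}M)(1-q^n)^{24}$ together with $(1-q^n)^{24}=1-24q^n+276q^{2n}-\cdots$, the $q^n$ coefficient of the $n$th factor is $T_{\mathbf{C}}M-24$ and its $q^{2n}$ coefficient is $S^2T_{\mathbf{C}}M-24\,T_{\mathbf{C}}M+276$. Taking the product over $n$ and collecting terms (only $n=1,2$ contribute below $q^3$, and there is no cross term built from two $q^1$'s), the coefficients in degrees $q^0,q^1,q^2$ are $1$, $T_{\mathbf{C}}M-24$ and $S^2T_{\mathbf{C}}M-23\,T_{\mathbf{C}}M+252$. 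Pairing with $\widehat{A}(TM)$ and integrating gives $w_0=I_0$, $w_1=I_1-24I_0$ and $w_2=I_2-23I_1+252I_0$.

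Next I would impose modularity. Writing $W(M)=\alpha E_4(\tau)^3+\beta\Delta(\tau)$ with the expansions $E_4(\tau)^3=1+720q+179280q^2+\cdots$ and $\Delta(\tau)=q-24q^2+\cdots$, the $q^0$ and $q^1$ coefficients give $\alpha=I_0$ and $\beta=I_1-744I_0$. Substituting into the $q^2$ coefficient yields $w_2=179280I_0-24\beta=197136I_0-24I_1$. Equating this with $w_2=I_2-23I_1+252I_0$ and solving gives $I_2=196884\,I_0-I_1$, which is precisely $\int_M\widehat{A}(TM)\mathrm{ch}(S^2T_{\mathbf{C}}M)=\int_M\widehat{A}(TM)\mathrm{ch}(-T_{\mathbf{C}}M+196884)$. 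The number $196884$ is of course the first Fourier coefficient of $j=E_4^3/\Delta$, which is why the moonshine number appears. For the congruence I would note that $196884=2^2\cdot 3^3\cdot 1823$ is divisible by $3$, while $I_0,I_1,I_2$ are integers by the Atiyah--Singer index theorem; hence reducing the exact identity modulo $3\mathbf{Z}$ gives $I_2\equiv -I_1$.

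The only delicate point is the bookkeeping in the second step: one must correctly account for the rank correction $-24$ hidden in $\widetilde{T_{\mathbf{C}}M}$ and the resulting degree-zero contributions (the constants $252$ and $276$), since these multiply the $\widehat{A}$-genus $I_0$ and feed directly into the final constant $196884$. An error there would shift that constant and destroy the divisibility by $3$. Everything else is forced by the two dimensionality of the weight $12$ modular forms over $SL(2,\mathbf{Z})$ and is routine.
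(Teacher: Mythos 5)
Your proposal is correct and follows essentially the same route as the paper: expand $\Theta(T_{\mathbf{C}}M)$ to order $q^2$, use that the weight $12$ Witten genus lies in the span of $E_4(\tau)^3$ and $\Delta(\tau)$, and solve the resulting linear system to get $I_2=196884\,I_0-I_1$, then reduce mod $3$ using $3\mid 196884$ and integrality of the indices. Your bookkeeping (the coefficients $T_{\mathbf{C}}M-24$ and $S^2T_{\mathbf{C}}M-23T_{\mathbf{C}}M+252$, and the final constant $196884$) matches the paper's computation exactly.
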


\begin{proof} Since the Witten genus $W(M)$ is a weight $12$ modular form
over $SL(2, \mathbf{Z})$, by Tate's Theorem, we have
 \be \int_M \widehat{A}(TM)\mathrm{ch}(\Theta (T_{\mathbf{C%
}}M))=mE_4(\tau)^3+n\Delta (\tau).
\ee

Expanding $\Theta (T_{\mathbf{C}}M)$, we have
\begin{equation}
\begin{split}
&\Theta (T_{\mathbf{C}}M) \\
=&\bigotimes_{n=1}^{\infty }S_{q^{n}}(\widetilde{
T_{\mathbf{C}}M})\\
=&\overset{\infty }{\underset{n=1}{\bigotimes }}S_{q^{n}}(T_{
\mathbf{C}}M)\Lambda _{-q^{n}}(\mathbf{C}^{24}) \\
=&(1+T_{\mathbf{C}}Mq+S^{2}T_{\mathbf{C}}Mq^{2})\otimes(1+T_{\mathbf{C}}Mq^{2})\otimes(1-24q+276q^{2})
\otimes (1-24q^{2}) +O(q^3)\\
=&[1+T_{\mathbf{C}}Mq+(S^{2}T_{\mathbf{C}}M+T_{\mathbf{C}}M)q^{2}]\otimes (1-24q+252q^{2})+O(q^3) \\
=&1+(T_{\mathbf{C}}M-24)q+(S^{2}T_{\mathbf{C}}M-23T_{\mathbf{C}}M+252)q^{2}+O(q^3).\\
\end{split}
\end{equation}

Since 
\be
\begin{split} 
&E_4(\tau)^3=1+720q+179280q^2+O(q^3),\\
&\Delta(\tau)=q-24q^2+O(q^3),
\end{split}
\ee
we have
\begin{equation}
\int_M \widehat{A}(TM)=m,
\end{equation}

\begin{equation}
\int_M \widehat{A}(TM)\mathrm{ch}(T_{\mathbf{C}}M-24)=
720m+n,
\end{equation}

\begin{equation}
\int_M \widehat{A}(TM)\mathrm{ch}(S^{2}T_{\mathbf{C}}M-23T_{\mathbf{C}%
}M+252)=179280m-24n.
\end{equation}

By solving these relations, it's not hard to get (2.1).
\end{proof}
\begin{remark} We would like to point out that (2.1) is implicitly derived in the book \cite{HBJ} by using a different basis for weight 12 modular forms. Our contribution here is to observe (2.2) and use it to prove mod 3 congruence of the twisted signature.
\end{remark}

Using the string condition and putting $a=0$, $b=1$ and $V=TM$ in Liu-Wang's construction, we get a pair of modular forms, by using the
modularity of which, we can prove the following lemma:
\begin{lemma}

\begin{equation}
\begin{split}
&
\int_M \widehat{L}(TM)\mathrm{ch}\left( \Lambda ^{2}T_{\mathbf{C}}M-T_{%
\mathbf{C}}M\right)\\
\equiv &\int_M \widehat{A}(TM)\mathrm{ch}\left( \Lambda ^{2}T_{\mathbf{%
C}}M-S^{2}T_{\mathbf{C}}M+T_{\mathbf{C}}M\right) \ \    \mathrm{mod \, 3\ZZ}.
\end{split}
\end{equation}

\end{lemma}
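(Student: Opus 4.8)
The plan is to specialize the Liu--Wang construction of Theorem~\ref{Liu-Wang Thm} to $a=0$, $b=1$ and $V=TM$, and then to extract the congruence from the low-order $q$-coefficients of the two modular forms it produces. With these choices $V$ is $2l=24$ dimensional, so $l=12$, while $2k=12$. Because $M$ is string, $p_1(TM)=0$ in cohomology, so $p_1(TM)-(a+2b)p_1(V)=p_1(TM)-2p_1(TM)$ is cohomologically trivial; hence the exponential prefactor of $Q_1$ and the entire $[p_1(TM)-(a+2b)p_1(V)]\overline{Q_2}$ correction in Theorem~\ref{Liu-Wang Thm} disappear upon integration, leaving
$$P_1(\tau):=\int_M Q_1=\int_M\{\widehat{A}(TM)\mathrm{ch}(\Theta_1)\}^{(24)},\qquad P_2(\tau):=\int_M Q_2=\int_M\{\widehat{A}(TM)\mathrm{ch}(\Delta(TM))\mathrm{ch}(\Theta_2)\}^{(24)},$$
with $\Theta_i=\Theta_i(T_{\mathbf{C}}M,T_{\mathbf{C}}M,0,1)$. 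By Theorem~\ref{Liu-Wang Thm}, $P_1$ is modular of weight $12$ over $\Gamma_0(2)$, $P_2$ is modular of weight $12$ over $\Gamma^0(2)$, and $P_1(-1/\tau)=2^{-12}\tau^{12}P_2(\tau)$. The essential simplification is the Chern--root identity $\widehat{A}(TM)\mathrm{ch}(\Delta(TM))=\widehat{L}(TM)$, so that the $q$-coefficients of $P_2$ are $\widehat{L}$-twisted genera and those of $P_1$ are $\widehat{A}$-twisted genera.

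I would then expand $\Theta_1$ and $\Theta_2$ to order $q$ using $S_t(\widetilde E)$, $\Lambda_t(\widetilde E)$ and $T_{\mathbf{C}}M\otimes T_{\mathbf{C}}M=S^2T_{\mathbf{C}}M\oplus\Lambda^2T_{\mathbf{C}}M$. A direct computation shows the $q^1$-coefficient of $\mathrm{ch}(\Theta_1)$ is $\mathrm{ch}(\Lambda^2T_{\mathbf{C}}M-S^2T_{\mathbf{C}}M+T_{\mathbf{C}}M)$ (its $q^{1/2}$-coefficient vanishing, as it must since $T\in\Gamma_0(2)$), so that the $q^1$-coefficient of $P_1$ is precisely the right-hand side of the Lemma. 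Expanding $\Theta_2$ likewise expresses the $q^0$-, $q^{1/2}$- and $q^1$-coefficients of $P_2$ as $\widehat{L}$-twisted genera, and a short elimination gives
$$\int_M\widehat{L}(TM)\mathrm{ch}(\Lambda^2T_{\mathbf{C}}M-T_{\mathbf{C}}M)=[q^1]P_2+21\,[q^{1/2}]P_2+252\,[q^0]P_2.$$
As $21\equiv252\equiv0\pmod 3$, the left-hand side of the Lemma is $\equiv[q^1]P_2\pmod{3\ZZ}$, and the Lemma reduces to the single congruence $[q^1]P_2\equiv[q^1]P_1\pmod 3$.

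To establish this I would exploit the structure of level-$2$ modular forms. Writing $P_2=\sum_{j=0}^{3}c_j\,\delta_2^{6-2j}\varepsilon_2^{j}$ in the weight-$12$ basis afforded by $M_*^{\ZZ}(\Gamma^0(2))=\ZZ[8\delta_2,\varepsilon_2]$, the transformation laws $\delta_2(-1/\tau)=\tau^2\delta_1(\tau)$ and $\varepsilon_2(-1/\tau)=\tau^4\varepsilon_1(\tau)$, combined with $P_1(-1/\tau)=2^{-12}\tau^{12}P_2(\tau)$, force $P_1=2^{-12}\sum_{j=0}^{3}c_j\,\delta_1^{6-2j}\varepsilon_1^{j}$ with the \emph{same} coefficients $c_j$. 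Using the explicit $q$-expansions of $\delta_1,\varepsilon_1,\delta_2,\varepsilon_2$ one reads off the $q^0$-, $q^{1/2}$- and $q^1$-coefficients of each basis monomial; since the four monomials $\delta_2^{6-2j}\varepsilon_2^{j}$ have distinct leading orders $q^0,q^{1/2},q^1,q^{3/2}$, the resulting linear system is triangular, and eliminating the $c_j$ produces an integral relation which I expect to take the form
$$[q^1]P_2=2^{12}\,[q^1]P_1+3\cdot 2^{16}\,[q^0]P_1-48\,[q^0]P_2-24\,[q^{1/2}]P_2.$$
All five quantities here are genuine indices, hence integers; reducing mod $3$ and using $2^{12}=(2^2)^6\equiv1$ while $3\cdot2^{16}$, $48$ and $24$ are each divisible by $3$, we obtain $[q^1]P_2\equiv[q^1]P_1\pmod 3$, which finishes the proof.

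The hardest part is this final linear-algebra step: one must carry the negative powers of $2$ hidden in the half-integral $q^{1/2}$-expansions of $\delta_2,\varepsilon_2$ (and the global factor $2^{-12}$) accurately enough to confirm that the relation among the coefficients is truly integral and that, modulo $3$, every coefficient except the unit $2^{12}\equiv1$ multiplying $[q^1]P_1$ drops out. A subsidiary point demanding care is the vanishing of the $p_1$-terms in Theorem~\ref{Liu-Wang Thm} under the string hypothesis, which is a statement about cohomology classes of the integrands (via $[p_1(TM)]=0$) rather than about the characteristic forms pointwise.
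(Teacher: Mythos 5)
Your proposal is correct and follows essentially the same route as the paper's own proof: the same specialization $a=0$, $b=1$, $V=TM$ of the Liu--Wang forms, the same use of the string condition and of $\widehat{A}(TM)\mathrm{ch}(\Delta(TM))=\widehat{L}(TM)$, the same expansion in the integral basis of $M_*^{\ZZ}(\Gamma^0(2))$ together with the transformation law $P_1(-1/\tau)=2^{-12}\tau^{12}P_2(\tau)$, and the same comparison of low-order $q$-coefficients modulo $3$. Your final elimination identity (including the claimed coefficients $2^{12}$, $3\cdot 2^{16}$, $-48$, $-24$) checks out exactly and is just a repackaging of the paper's computation, which instead solves for the basis coefficients $h_0,h_1,h_2$ as $\widehat{L}$-twisted genera and reduces $2^{18}\cdot 9h_0+2^{12}\cdot 5h_1+2^6h_2-3h_3$ modulo $3$.
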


\begin{proof}
Putting $a=0$, $b=1$ and $V=TM$ in Liu-Wang's construction, we have

\begin{equation}
\Theta _{1}(T_{\mathbf{C}}M,T_{\mathbf{C}}M,0,1)=\bigotimes_{n=1}^{\infty
}S_{q^{n}}(\widetilde{T_{\mathbf{C}}M})\otimes \bigotimes_{r=1}^{\infty
}\Lambda _{q^{r-{\frac{1}{2}}}}(\widetilde{T_{\mathbf{C}}M})\otimes
\bigotimes_{s=1}^{\infty }\Lambda _{-q^{s-{\frac{1}{2}}}}(\widetilde{T_{%
\mathbf{C}}M}),  \label{Eqn: Theta 1}
\end{equation}

\begin{equation}
\Theta _{2}(T_{\mathbf{C}}M,T_{\mathbf{C}}M,0,1)=\bigotimes_{n=1}^{\infty
}S_{q^{n}}(\widetilde{T_{\mathbf{C}}M})\otimes \bigotimes_{m=1}^{\infty
}\Lambda _{q^{m}}(\widetilde{T_{\mathbf{C}}M})\otimes
\bigotimes_{r=1}^{\infty }\Lambda _{q^{r-{\frac{1}{2}}}}(\widetilde{T_{%
\mathbf{C}}M}),  \label{Eqn: Theta 2}
\end{equation}

\begin{equation}
Q_{1}(T_{\mathbf{C}}M,T_{\mathbf{C}}M,0,1, \tau)=\left\{ e^{-\frac{1}{24}E_{2}(\tau )p_{1}(TM)} \widehat{A}(TM)\mathrm{ch}(\Theta _{1}(T_{%
\mathbf{C}}M,T_{\mathbf{C}}M,0,1))\right\} ^{(24)},  \label{Eqn: Q_1}
\end{equation}

\begin{equation}
\begin{split}
&Q_{2}(T_{\mathbf{C}}M,T_{\mathbf{C}}M,0,1, \tau )-p_{1}(TM)\overline{Q_2}(T_{\mathbf{C}}M,T_{\mathbf{C}}M,0,1, \tau )\\
=&\left\{ e^{-\frac{1}{24}E_{2}(\tau )p_{1}(TM)}  \widehat{L}(TM)\mathrm{ch}(\Theta _{2}(T_{%
\mathbf{C}}M,T_{\mathbf{C}}M,0,1))\right\} ^{(24)}.\end{split}
\end{equation}
Note that we have used $\widehat{L}(TM)=\widehat{A}(TM)\mathrm{ch}(\Delta(TM))$.

Let
\be
\Theta _{1}(T_{\mathbf{C}}M,T_{\mathbf{C}}M,0,1)=A_0+A_1q^{1/2}+A_2q+\cdots,
\ee
\be
\Theta _{2}(T_{\mathbf{C}}M,T_{\mathbf{C}}M,0,1)=B_0+B_1q^{1/2}+B_2q+\cdots.
\ee

Since $M$ is string, we have
\be
\begin{split}
R_{1}(\tau ):=&\int_M Q_{1}(T_{\mathbf{C}}M,T_{\mathbf{C}}M,0,1, \tau)\\
=&\int_M e^{-\frac{1}{24}E_{2}(\tau )p_{1}(TM)}  \widehat{A}(TM)\mathrm{ch}(\Theta _{1}(T_{%
\mathbf{C}}M,T_{\mathbf{C}}M,0,1))\\
=& \int_M  \widehat{A}(TM)\mathrm{ch}(\Theta _{1}(T_{%
\mathbf{C}}M,T_{\mathbf{C}}M,0,1)),
\end{split}
\ee

\be
\begin{split}
R_{2}(\tau ):=&\int_M \{Q_{2}(T_{\mathbf{C}}M,T_{\mathbf{C}}M,0,1, \tau )-p_{1}(TM)\overline{Q_2}(T_{\mathbf{C}}M,T_{\mathbf{C}}M,0,1, \tau )\} \\
=&\int_M e^{-\frac{1}{24}E_{2}(\tau )p_{1}(TM)} \widehat{L}(TM)\mathrm{ch}(\Theta _{2}(T_{%
\mathbf{C}}M,T_{\mathbf{C}}M,0,1))\\
=& \int_M \widehat{L}(TM)\mathrm{ch}(\Theta _{2}(T_{%
\mathbf{C}}M,T_{\mathbf{C}}M,0,1)).
\end{split}
\ee

By Theorem 1.3, we see that
$R_{1}(\tau )$ is an integral modular form of weight $12$
over $\Gamma _{0}(2)$, while $R_{2}(\tau )$ is an integral modular form of weight $12$
over $\Gamma ^{0}(2)$.

So by Theorem 1.2, we have the following expansion
\begin{equation}
R_{2}(\tau )=h_{0}(8\delta _{2})^{6}+h_{1}(8\delta _{2})^{4}\varepsilon _{2}+h_{2}(8\delta _{2})^{2}\varepsilon _{2}^2 +h_{3}\varepsilon_{2}^{3},  \label{Eqn: Q_2 expansion}
\end{equation}%
where each $h_{r}=\int_M\widehat{A}(TM)%
\mathrm{ch}(b_{r}(T_{\mathbf{C}}M)),\ 0\leq r\leq 3,$ and each
$b_{r}(T_{\mathbf{C}}M)$ is a canonical integral linear combination of $%
B_{j}(T_{\mathbf{C}}M),0\leq j\leq r.$

From (2.18) and (1.14), (1.15), one has
\be \int_M \widehat{L}(TM) \mathrm{ch}(B_0)=h_0,
\ee

\be
 \int_M \widehat{L}(TM)\mathrm{ch}(B_1)=144h_0+h_1,
\ee

\be
 \int_M \widehat{L}(TM) \mathrm{ch}(B_2)=8784h_0+104h_1+h_2,
\ee

From (1.19) and (\ref{Eqn: Theta 2}), one can
compute the $B_{i}$'s explicitly as follows,

\begin{equation}
\begin{split}
&B_0+B_1q^{1/2}+B_2q+O(q^{3/2}) \\
=&\bigotimes_{n=1}^{\infty}S_{q^{n}}(\widetilde{T_{\mathbf{C}}M})\otimes \bigotimes_{m=1}^{\infty}\Lambda _{q^{m}}(\widetilde{T_{\mathbf{C}}M})\otimes
\bigotimes_{r=1}^{\infty }\Lambda _{q^{r-{\frac{1}{2}}}}(\widetilde{T_{\mathbf{C}}M})\\
=&\overset{\infty }{\underset{n=1}{\bigotimes }}\frac{\Lambda _{-q^{n}}(\mathbf{C}^{24})}{
\Lambda _{-q^{n}}(T_{\mathbf{C}}M)}\otimes \overset{\infty }{\underset{m=1}{
\bigotimes }}\frac{\Lambda _{q^{m}}(T_{\mathbf{C}}M)}{\Lambda _{q^{m}}(
\mathbf{C}^{24})}\otimes \overset{\infty }{\underset{r=1}{\bigotimes }}\frac{
\Lambda _{q^{r-\frac{1}{2}}}(T_{\mathbf{C}}M)}{\Lambda _{q^{r-\frac{1}{2}}}(
\mathbf{C}^{24})} \\
=& [1+(T_{C}M-24)q]\otimes[1+(T_{C}M-24)q]\otimes\frac{1+T_{C}Mq^{\frac{1}{2}}+\Lambda ^{2}T_{C}Mq}{1+24q^{\frac{1}{2}}+276q}+O(q^{3/2}) \\
=&[1+(2T_{C}M-48)q]\otimes(1+T_{C}Mq^{\frac{1}{2}}+\Lambda ^{2}T_{C}Mq)\otimes(1-24q^{\frac{1}{2}}+300q)+O(q^{3/2}) \\
=&1+(T_{C}M-24)q^{\frac{1}{2}}+(\Lambda ^{2}T_{C}M-22T_{C}M+252)q+O(q^{3/2}).
\end{split}
\end{equation}

So we have
\begin{equation}
B_{0}=1,
\end{equation}

\begin{equation}
B_{1}=T_{\mathbf{C}}M-24,
\end{equation}

\begin{equation}
B_{2}=\Lambda ^{2}T_{\mathbf{C}}M-22T_{\mathbf{C}}M+252.
\end{equation}

Then by (2.19)-(2.22), we get
\begin{equation}
h_{0}=\int_M\widehat{L}(TM),
\label{Eqn: h_0 value}
\end{equation}

\begin{equation}
h_{1}=\int_M \widehat{L}(TM)\mathrm{ch}(T_{%
\mathbf{C}}M-168), \label{Eqn: h_1 value}
\end{equation}

\begin{equation}
h_{2}=\int_M \widehat{L}(TM)\mathrm{ch}%
(\Lambda ^{2}T_{\mathbf{C}}M-126T_{\mathbf{C}}M+8940).
\label{Eqn: h_2 value}
\end{equation}

Also by Theorem 1.3, the following identity holds,
\begin{equation}
R_{1}\left(-\frac{1}{\tau }\right)=2^{-12}\tau ^{12}R_{2}(\tau ).
\end{equation}

Therefore, by (1.16) and (\ref{Eqn: Q_2 expansion}), we
have

\begin{equation}
R_{1}(\tau )=2^{-12}[h_{0}(8\delta _{1})^{6}+h_{1}(8\delta _{1})^{4}\varepsilon _{1}+h_{2}(8\delta _{1})^{2}\varepsilon _{1}^2 +h_{3}\varepsilon _{1}^{3}].  \label{Eqn: Q_1 expansion}
\end{equation}

Note that \be
\begin{split} &(8\delta_1)^{6-2r}{\varepsilon_1}^{r}\\
=&(2+48q)^{6-2r}({1\over{16}}-q)^rO(q^2)\\
=&2^{6-6r}[1+24(6-2r)q][1-16rq]+O(q^2)\\
=&2^{6-6r}[1+(144-64r)q]+O(q^2).
\end{split}
 \ee

Comparing the coefficient of $q$, we have
\begin{equation}
\int_M \widehat{A}(TM)\mathrm{ch}(A_2)=2^{-12}\sum_{r=0}^{3}2^{6-6r}(144-64r)h_{r}.
\end{equation}

By (\ref{symmetric and exterior powers relation}) and (\ref{Eqn: Theta 1}),
we can explicitly expand $\Theta _{1}(T_{\mathbf{C}}M, T_{\mathbf{C}}M, 0, 1)$ as follows,

\begin{equation}
\begin{split}
&\Theta _{1}(T_{\mathbf{C}}M, T_{\mathbf{C}}M, 0, 1)\\
=& \overset{\infty }{\underset{n=1}{\bigotimes }}
\frac{\Lambda _{-q^{n}}(\mathbf{C}^{24})}{\Lambda _{-q^{n}}(T_{\mathbf{C}}M)}\otimes
\overset{\infty }{\underset{r=1}{\bigotimes }}\frac{\Lambda _{q^{r-\frac{1}{2%
}}}(T_{\mathbf{C}}M)}{\Lambda _{q^{r-\frac{1}{2}}}(\CC^{24})}\otimes \overset{%
\infty }{\underset{s=1}{\bigotimes }}\frac{\Lambda _{-q^{s-\frac{1}{2}}}(T_{%
\mathbf{C}}M)}{\Lambda _{-q^{s-\frac{1}{2}}}(\CC^{24})} \\
=& \frac{1-24q}{1-T_{\mathbf{C}}Mq}\frac{1+T_{\mathbf{C}}Mq^{\frac{1}{2}%
}+\Lambda ^{2}T_{\mathbf{C}}Mq}{1+24q^{\frac{1}{2}}+276q}\frac{1-T_{\mathbf{C%
}}Mq^{\frac{1}{2}}+\Lambda ^{2}T_{\mathbf{C}}Mq}{1-24q^{\frac{1}{2}}+276q}+O(q^{3/2}) \\
=& \frac{1-24q}{1-T_{\mathbf{C}}Mq}\frac{1+(2\Lambda ^{2}T_{\mathbf{C}}M-T_{%
\mathbf{C}}M\otimes T_{\mathbf{C}}M)q}{1-24q}+O(q^{3/2}) \\
=&(1-24q)(1+T_{\mathbf{C}}Mq)[1+(2\Lambda ^{2}T_{\mathbf{C}}M-T_{%
\mathbf{C}}M\otimes T_{\mathbf{C}}M)q](1+24q)+O(q^{3/2}) \\
=&1+(\Lambda ^{2}T_{\mathbf{C}}M-S^{2}T_{\mathbf{C}}M+T_{\mathbf{C}}M)q+O(q^{3/2}).
\end{split}
\label{Eqn: theta_1 expansion}
\end{equation}

So one has
\be A_{2}=\Lambda ^{2}T_{\mathbf{C}}M-S^{2}T_{\mathbf{C}}M+T_{\mathbf{C}}M.\ee

By (2.32) and (2.34), we have
\begin{equation}
\begin{split}
&\int_M \widehat{A}(TM)\mathrm{ch}\left( \Lambda ^{2}T_{\mathbf{%
C}}M-S^{2}T_{\mathbf{C}}M+T_{\mathbf{C}}M\right)\\
=&2^{-12}\sum_{r=0}^{3}2^{6-6r}(144-64r)h_{r}\\
=&2^{-20}(2^{18}\cdot 9h_{0}+2^{12}\cdot 5h_{1}+2^{6}h_{2}-3h_{3}) \\
\end{split}
\ee

Hence
\be
\begin{split}
&2^{20}\int_M \widehat{A}(TM)\mathrm{ch}\left( \Lambda ^{2}T_{\mathbf{%
C}}M-S^{2}T_{\mathbf{C}}M+T_{\mathbf{C}}M\right)\\
=&2^{18}\cdot 9h_{0}+2^{12}\cdot 5h_{1}+2^{6}h_{2}-3h_{3}\\
\equiv&  h_2-h_1\ \ \ \ \ \ \ \ \ \ \ \ \ \ \ \ \ \ \ \ \ \ \ \ \ \ \ \ \ \ \ \ \ \ \ \ \ \ \ \ \ \ \ \ \mathrm{mod}3\ZZ\\
=&\int_M \widehat{L}(TM)\mathrm{ch}\left(\Lambda ^{2}T_{%
\mathbf{C}}M-127T_{\mathbf{C}}M+9108\right)\\
\equiv& \int_M \widehat{L}(TM)\mathrm{ch}\left(\Lambda ^{2}T_{%
\mathbf{C}}M-T_{\mathbf{C}}M\right) \ \ \ \ \ \ \ \ \ \ \ \mathrm{mod}3\ZZ.
\end{split}
\ee
Noting that $2^{20}\equiv 1 (\mathrm{mod} \, 3\ZZ)$, we get Lemma 2.2.
\end{proof}

Putting $a=1, b=0$ and $V=TM$ in Liu-Wang's construction, one obtains another pair of modular forms (c.f. \cite{Liu1}). Applying the modularity of this pair, we have
\begin{lemma}
\be
\int_M \widehat{ L}(TM)\mathrm{ch}(T_{\mathbf{C}}M)\\
= 2^{11}\int_M \widehat{A}(TM)\mathrm{ch}(\Lambda ^{2}T_{\mathbf{C%
}}M-47T_{\mathbf{C}}M+900),
\ee
and therefore
\be
\int_M \widehat{ L}(TM)\mathrm{ch}(T_{\mathbf{C}}M)
\equiv
\int_M \widehat{A}(TM)\mathrm{ch}(-\Lambda ^{2}T_{\mathbf{C}}M-T_{%
\mathbf{C}}M)\ \ \mathrm{mod \, 3\ZZ}.
\ee
\end{lemma}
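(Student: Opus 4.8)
The plan is to run the same modularity argument as in Lemma 2.2, but with the Liu--Wang parameters $a=1$, $b=0$ and $V=TM$, which interchanges the roles of $\widehat{A}$ and $\widehat{L}$ between the two theta-series. First I would specialize the constructions. With these parameters
$$\Theta_1(T_{\mathbf{C}}M,T_{\mathbf{C}}M,1,0)=\bigotimes_{n=1}^\infty S_{q^n}(\widetilde{T_{\mathbf{C}}M})\otimes\bigotimes_{m=1}^\infty\Lambda_{q^m}(\widetilde{T_{\mathbf{C}}M}),$$
$$\Theta_2(T_{\mathbf{C}}M,T_{\mathbf{C}}M,1,0)=\bigotimes_{n=1}^\infty S_{q^n}(\widetilde{T_{\mathbf{C}}M})\otimes\bigotimes_{s=1}^\infty\Lambda_{-q^{s-\frac{1}{2}}}(\widetilde{T_{\mathbf{C}}M}).$$
Since $a+2b=1$ and $V=TM$, the weight $p_1(TM)-(a+2b)p_1(V)=p_1(TM)-p_1(TM)$ vanishes identically as a form, so the exponential factor equals $1$ and the $\overline{Q_2}$ correction drops. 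Using $\widehat{L}(TM)=\widehat{A}(TM)\mathrm{ch}(\Delta(TM))$ for the $a=1$ spinor factor in $Q_1$ and the trivial $b=0$ spinor factor in $Q_2$, the two integrals become $\widetilde{R}_1(\tau)=\int_M\widehat{L}(TM)\mathrm{ch}(\Theta_1)$ and $\widetilde{R}_2(\tau)=\int_M\widehat{A}(TM)\mathrm{ch}(\Theta_2)$. By Theorem 1.3, $\widetilde{R}_1$ is a weight $12$ modular form over $\Gamma_0(2)$, $\widetilde{R}_2$ one over $\Gamma^0(2)$, and, with $l=12$, the transformation law reads $\widetilde{R}_1(-1/\tau)=2^{(a-b)l}\tau^{12}\widetilde{R}_2(\tau)=2^{12}\tau^{12}\widetilde{R}_2(\tau)$. (Note the string hypothesis is not actually needed here.)

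Next I would expand $\widetilde{R}_2$ in the integral basis of Theorem 1.2, writing $\widetilde{R}_2=\sum_{r=0}^3\widetilde{h}_r(8\delta_2)^{6-2r}\varepsilon_2^r$ with $\widetilde{h}_r=\int_M\widehat{A}(TM)\mathrm{ch}(\widetilde{b}_r)$, where $\widetilde{b}_r$ is the canonical integral combination of the low Fourier coefficients of $\Theta_2$. Expanding $\Theta_2$ to order $q$ via $S_t=1/\Lambda_{-t}$ and $\Lambda_t(E-F)=\Lambda_t(E)/\Lambda_t(F)$ gives $\widetilde{B}_0=1$, $\widetilde{B}_1=24-T_{\mathbf{C}}M$, $\widetilde{B}_2=\Lambda^2T_{\mathbf{C}}M-23T_{\mathbf{C}}M+276$. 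Matching against $(8\delta_2)^6=1+144q^{1/2}+8784q+\cdots$, $(8\delta_2)^4\varepsilon_2=q^{1/2}+104q+\cdots$, $(8\delta_2)^2\varepsilon_2^2=q+\cdots$ then yields $\widetilde{h}_0=\int_M\widehat{A}(TM)$, $\widetilde{h}_1=\int_M\widehat{A}(TM)\mathrm{ch}(-T_{\mathbf{C}}M-120)$ and $\widetilde{h}_2=\int_M\widehat{A}(TM)\mathrm{ch}(\Lambda^2T_{\mathbf{C}}M+81T_{\mathbf{C}}M+3972)$. Applying $\delta_2(-1/\tau)=\tau^2\delta_1$ and $\varepsilon_2(-1/\tau)=\tau^4\varepsilon_1$ to the transformation law converts it into $\widetilde{R}_1(\tau)=2^{12}\sum_{r=0}^3\widetilde{h}_r(8\delta_1)^{6-2r}\varepsilon_1^r$.

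Finally I would read off the Fourier coefficients of $\widetilde{R}_1=\int_M\widehat{L}(TM)\mathrm{ch}(\Theta_1)$; a short expansion gives $\widetilde{A}_0=1$ and $\widetilde{A}_1=2T_{\mathbf{C}}M-48$. Using the identity $(8\delta_1)^{6-2r}\varepsilon_1^r=2^{6-6r}[1+(144-64r)q]+O(q^2)$ as in (2.31), the $q^0$ and $q^1$ coefficients produce the two linear relations
$$\int_M\widehat{L}(TM)=2^{18}\widetilde{h}_0+2^{12}\widetilde{h}_1+2^6\widetilde{h}_2+\widetilde{h}_3,$$
$$2\int_M\widehat{L}(TM)\mathrm{ch}(T_{\mathbf{C}}M)-48\int_M\widehat{L}(TM)=2^{18}\cdot144\,\widetilde{h}_0+2^{12}\cdot80\,\widetilde{h}_1+2^{10}\widetilde{h}_2-48\widetilde{h}_3.$$
Eliminating $\int_M\widehat{L}(TM)$ between them, the $\widetilde{h}_3$ terms cancel exactly; dividing by $2$ and substituting the $\widetilde{h}_r$ gives, after collecting $2^{11}$, the first claim $\int_M\widehat{L}(TM)\mathrm{ch}(T_{\mathbf{C}}M)=2^{11}\int_M\widehat{A}(TM)\mathrm{ch}(\Lambda^2T_{\mathbf{C}}M-47T_{\mathbf{C}}M+900)$. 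The congruence then follows because $\int_M\widehat{A}(TM)\mathrm{ch}(\Lambda^2T_{\mathbf{C}}M)$, $\int_M\widehat{A}(TM)\mathrm{ch}(T_{\mathbf{C}}M)$ and $\int_M\widehat{A}(TM)$ are integers (Atiyah--Singer, $M$ spin), so the integer coefficients reduce modulo $3$: with $2^{11}\equiv-1$, $2^{11}\cdot47\equiv1$ and $900\equiv0$, the right side collapses to $\int_M\widehat{A}(TM)\mathrm{ch}(-\Lambda^2T_{\mathbf{C}}M-T_{\mathbf{C}}M)$.

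I expect the only real obstacle to be the bookkeeping: expanding $\Theta_1$ and $\Theta_2$ (the latter with half-integer powers and the genuine $\Lambda^2T_{\mathbf{C}}M$ term) correctly to order $q$, and arranging the two linear relations so that both the unwanted $\int_M\widehat{L}(TM)$ and the inaccessible coefficient $\widetilde{h}_3$ drop out. The conceptual content is identical to Lemma 2.2; the single genuinely different point to watch is that here the $p_1$-weight vanishes identically, so no string condition intervenes and the elimination of $\widetilde{h}_3$ is exact rather than merely valid modulo $3$.
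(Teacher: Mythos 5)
Your proposal is correct and follows essentially the same route as the paper: the same Liu--Wang specialization $a=1$, $b=0$, $V=TM$, the same observation that $p_{1}(TM)-(a+2b)p_{1}(V)$ vanishes identically (so no string condition is needed), the same expansion of $\Theta_{2}$ giving $h_{0},h_{1},h_{2}$, and the same key relation $\int_M \widehat{L}(TM)\mathrm{ch}(T_{\mathbf{C}}M)=2^{11}(3\cdot 2^{12}h_{0}+2^{7}h_{1}+h_{2})$, from which the stated identity and the mod $3$ reduction follow. The only difference is that where the paper invokes Theorem 2.3 of \cite{CH} to obtain this relation, you derive it directly by comparing the $q^{0}$ and $q^{1}$ coefficients of $R_{1}$ and eliminating $\int_M\widehat{L}(TM)$ together with $h_{3}$ --- exactly the shortcut the paper's own remark describes as equivalent to the process used for Lemma 2.2, so this is a self-contained version of the same argument rather than a different one.
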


\begin{proof}
When $a=1, b=0$ and $V=TM$, we have 

\h
\begin{split}
&\int_M Q_{1}(T_{\mathbf{C}}M,V_{\mathbf{C}},a,b, \tau)\\
=&\int_M \widehat{L}(TM)\mathrm{ch}\left(\bigotimes_{n=1}^{\infty
}S_{q^{n}}(\widetilde{T_{\mathbf{C}}M})\otimes 
\bigotimes_{m=1}^{\infty }\Lambda _{q^{m}}(\widetilde{V_{\mathbf{C}}})\right),
\end{split}
\e

\h 
\begin{split}
&\int_M Q_{2}(T_{\mathbf{C}}M,V_{\mathbf{C}},a,b, \tau)+[p_{1}(TM)-(a+2b)p_{1}(V)]\overline{
Q_{2}}(T_{\mathbf{C}}M,V_{\mathbf{C}},a,b, \tau)\\
=&\int_M \widehat{A}(M)\mathrm{ch}\left(\bigotimes_{n=1}^{\infty
}S_{q^{n}}(\widetilde{T_{\mathbf{C}}M})\otimes 
\bigotimes_{s=1}^{\infty }\Lambda _{-q^{s-{\frac{1}{2}}}}(\widetilde{T_{%
\mathbf{C}}M})\right).
\end{split}
\e

By applying modularity of this pair of modular forms, we can use Theorem 2.3 in \cite{CH}, which asserts that if $M$ is an $8m$ dimensional smooth closed oriented manifold, 

\be 
\int_M \widehat{ L}(TM)\mathrm{ch}(T_{\mathbf{C}}M)
=
2^{11}\left[\sum_{r=0}^{m-1}(m-r)2^{6(m-r-1)}h_r\right], 
\ee
where $h_r$'s are determined by 
\h
\begin{split}
&\int_M Q_{2}(T_{\mathbf{C}}M,T_{\mathbf{C}}M,1,0, \tau)\\
=&\int_M \widehat{A}(M)\mathrm{ch}\left(\bigotimes_{n=1}^{\infty
}S_{q^{n}}(\widetilde{T_{\mathbf{C}}M})\otimes 
\bigotimes_{s=1}^{\infty }\Lambda _{-q^{s-{\frac{1}{2}}}}(\widetilde{T_{%
\mathbf{C}}M})\right)\\
=&\sum_{r=0}^m h_r(8\delta_2)^{2m-r}\varepsilon _{2}^r.
\end{split}
\e

When $M$ is 24 dimensional, 
\be 
\int_M \widehat{ L}(TM)\mathrm{ch}(T_{\mathbf{C}}M)
=
2^{11}(3\times 2^{12}h_0+2^7h_1+h_2). 
\ee

To determine $h_0, h_1$ and $h_2$, we expand the $q$-series 
\be 
\begin{split}
&\Theta _{2}(T_{\mathbf{C}}M,T_{\mathbf{C}}M,1,0)\\
=& B_0+B_1q^{1/2}+B_2q+\cdots\\
=&\bigotimes_{n=1}^{\infty
}S_{q^{n}}(\widetilde{T_{\mathbf{C}}M})\otimes
\bigotimes_{s=1}^{\infty }\Lambda _{-q^{s-{\frac{1}{2}}}}(\widetilde{T_{%
\mathbf{C}}M})\\
=& \overset{\infty }{\underset{n=1}{\bigotimes }}
\frac{\Lambda _{-q^{n}}(\mathbf{C}^{24})}{\Lambda _{-q^{n}}(T_{\mathbf{C}}M)}\otimes \overset{%
\infty }{\underset{s=1}{\bigotimes }}\frac{\Lambda _{-q^{s-\frac{1}{2}}}(T_{%
\mathbf{C}}M)}{\Lambda _{-q^{s-\frac{1}{2}}}(\CC^{24})}\\
=&\frac{1-24q}{1-T_{\mathbf{C}}Mq}\frac{1-T_{\mathbf{C%
}}Mq^{\frac{1}{2}}+\Lambda ^{2}T_{\mathbf{C}}Mq}{1-24q^{\frac{1}{2}}+276q}+O(q^{3/2})\\
=&(1-24q)(1+T_{\mathbf{C}}Mq)(1-T_{\mathbf{C%
}}Mq^{\frac{1}{2}}+\Lambda ^{2}T_{\mathbf{C}}Mq)(1+24q^{\frac{1}{2}}+300q)+O(q^{3/2})\\
=&1+(24-T_\CC M)q^{\frac{1}{2}}+(\Lambda ^{2}T_{\mathbf{C}}M-23T_\CC M+276)q+O(q^{3/2}) .
\end{split}
\ee
and  note that $h_i$'s (similar to (2.19)-(2.21)) satisfy
\be 
\int_M \widehat{A}(TM) \mathrm{ch}(B_0)=h_0,
\ee

\be
 \int_M \widehat{A}(TM)\mathrm{ch}(B_1)=144z=h_0+h_1,
\ee

\be
 \int_M \widehat{A}(TM) \mathrm{ch}(B_2)=8784h_0+104h_1+h_2.
\ee

So
\h
\begin{split}
h_0=&\int_M \widehat{A}(TM),\\
h_1=&-\int_M \widehat{A}(TM)\mathrm{ch}(T_\CC M+120),\\
h_2=&\int_M \widehat{A}(TM)\mathrm{ch}(\Lambda^2T_\CC M+81T_\CC M+3972).
\end{split}
\e
(2.37) then easily follows from (2.40).
\end{proof}

\begin{remark} To derive Lemma 2.3, the string condition is not necessary. However, as we have seen, to obtain Lemma 2.2, the string condition is indispensable.
\end{remark} 
\begin{remark} The strategy of the proof of Lemma 2.3 is essentially the same as that of Lemma 2.2.  In each case, one constructs a pair of modular forms, the modularity of which gives us the desired result.  The application of Theorem 2.3 from \cite{CH} in the above proof is only to simplify the process to derive (2.40), which is similar to the process to derive (2.32) in the proof of Lemma 2.2. 
\end{remark}

Combining Lemma 1, Lemma 2 and Lemma 3, we have
\begin{equation}
\begin{split}
&\int_M \widehat{L}(TM)\mathrm{ch}(\Lambda ^{2}T_{\mathbf{C}}M)\\
\equiv &\int_M \widehat{L}(TM)\mathrm{ch}(T_{\mathbf{C}}M)+\int_M\widehat{A}(TM)\mathrm{ch}\left( \Lambda ^{2}T_{\mathbf{%
C}}M-S^{2}T_{\mathbf{C}}M+T_{\mathbf{C}}M\right) \ \    \mathrm{mod \, 3\ZZ}.\\
\equiv& \int_M \widehat{A}(TM)\mathrm{ch}(-\Lambda ^{2}T_{\mathbf{C}}M-T_{%
\mathbf{C}}M)+\int_M\widehat{A}(TM)\mathrm{ch}\left( \Lambda ^{2}T_{\mathbf{%
C}}M-S^{2}T_{\mathbf{C}}M+T_{\mathbf{C}}M\right) \ \    \mathrm{mod \, 3\ZZ}\\
=&\int_M\widehat{A}(TM)\mathrm{ch}\left(-S^{2}T_{\mathbf{C}}M\right)\\
\equiv&\int_M \widehat{A}(TM)\mathrm{ch}(T_{\mathbf{C}}M)\ \ \mathrm{mod \, 3\ZZ}
\end{split}
\end{equation}
as desired.

This finishes the proof of Theorem 0.1.

\section{The Examples and Computation}
In this section, we do computations on the two examples: $B^8 \times \mathbf{H}P^2\times \mathbf{H}P^2$ and $M_0^8\times M_0^8\times M_0^8$.

Recall that the twisted signature 
$$Sig(M, \WW):=Ind(D_{Sig}\otimes \Lambda^2T_\CC M)^{+}$$ 
and so by the Atiyah-Singer index theorem, 
$$Sig(M, \WW)= \int_M  \widehat{L}(TM)\mathrm{ch}(\Lambda^2T_\CC M).$$
First we have a lemma about $Sig(M, \WW)$ when the manifold $M$ is a product of several manifolds.

\begin{lemma} If $M=\prod_{i=1}^s N_i$, then
\be
\begin{split}
&Sig(M, \WW)\\
=&\sum_{i=1}^s Sig(N_i, \WW)\prod_{j\neq i}Sig(N_j)+\sum_{1\leq i<j\leq s}Sig(N_i, \TT)Sig(N_j, \TT)\prod_{p\neq i, j}Sig(N_p).\\
\end{split}
\ee
\end{lemma}
\begin{proof} It is not hard to see that the lemma follows from the multiplicity of the Hirzebruch $\widehat{L}$-class
$$\widehat{L}(M)=\prod_{i=1}^s \widehat{L}(N_i) $$ and
the following property of the exterior square
$$\Lambda^2(\oplus_{i=1}^s V_i)=\oplus_{i=1}^s \Lambda^2(V_i)\oplus \oplus_{i<j}V_i\otimes V_j,$$ where $V_i$'s are vector spaces.

\end{proof}

Assume $N$ is an 8 dimensional smooth closed oriented manifold. Let $p_1, p_2$ be the first and second Pontryagin classes of $N$. Let $[N]$ be the fundamental class. By direct computations, we have
\be
\begin{split}
&Sig(N)=\frac{7p_2-p_1^2}{45}[N],\\
&Sig(N, \TT)=\frac{112p_1^2-64p_2}{45}[N],\\
&Sig(N, \WW)=\frac{692p_1^2+196p_2}{45}[N],\\
&\widehat{A}(N)=\frac{7p_1^2-4p_2}{5760}[N].
\end{split}
\ee
The first three equalities can be derived from the following formulas about the Chern character  and $\widehat{L}$-class for a real vector bundle $V$: 
$$\mathrm{ch}(V_\CC)=\mathrm{dim}(V)+p_1(V)+\frac{p_1(V)^2-2p_2(V)}{12}+\cdots, $$
and when $\mathrm{dim}V=8$, 
$$\widehat{L}(V)=16+\frac{4}{3}p_1(V)+\frac{7p_2(V)-p_1(V)^2}{45}+\cdots. $$

Let $B^8$ be the Bott manifold, which is 8 dimensional and spin with $\widehat{A}(B^8)=1$, $Sig(B^8)=0$ (\cite{Lau}). By (3.2), it is easy to see that $p_1^2[B^8]=7\times 128, p_2[B^8]=128$ and therefore $$Sig(B^8, \TT)=2048, Sig(B^8, \WW)=14336.$$

By a theorem of Hirzebruch \cite{HBJ}, for the quaterionic projective plane $\HHH P^2$, the total Pontryagin class
$$ p(\HHH P^2)=(1+u)^6(1+4u)^{-1},$$ where $u\in H^4(\HHH P^2, \ZZ)$ is the generator. So $p_1(\HHH P^2)=2u$ and $p_2(\HHH P^2)=7u^2.$ By (3.2), it is easy to get
$$Sig(\HHH P^2)=1, Sig(\HHH P^2, \TT)=0,  Sig(\HHH P^2, \Lambda^2\TT)=92.$$

A manifold is called {\it almost-parallelizable} if its tangent bundle is trivial on the complement of a point (\cite{MK}). For a $4k$ dimensional almost-parallelizable manifold $M^{4k}$, all the Pontryagin classes $p_i=0$ for $i<k$. By the Cauchy lemma (c.f. \cite{HBJ}), each genus is a multiple of the $\widehat{A}$-genus, actually one has
\be W(M^{4k})=E_{2k}(\tau)\int_M\widehat{A}(M),\ee
\be Sig(M^{4k})=-2^{2k+1}(2^{2k-1}-1)\int_M\widehat{A}(M).\ee
Put $a_k=1$ if $k$ is even and $a_k=2$ if $k$ is odd. By plumbing method, Milnor and Kervaire have constructed an almost-parallelizable manifold $M_0^{4k}$ such that
\be  Sig(M_0^{4k})=a_k2^{2k+1}(2^{2k-1}-1)\cdot \mathrm{numerator} \left(\frac{B_{2k}}{4k}\right),\ee
 where $B_{2k}$ is the Bernoulli number. Since $B_4=-\frac{1}{30}$, $\mathrm{numerator} \left(\frac{B_{4}}{8}\right)=1$.  One sees from (3.5) that $Sig(M_0^8)=224$ and therefore from (3.4) and (3.3) that $\int_M\widehat{A}(M_0^8)=-1$ and $W(M_0^8)=-E_4(\tau)$.  We would like to point out that $M_0^8\times M_0^8\times M_0^8$ is an interesting 24 dimensional string manifold whose Witten genus $W(M_0^8\times M_0^8\times M_0^8)=-E_4(\tau)^3$. Plugging $Sig(M_0^8)=224$ into the first equality in (3.2) and using $p_1(M_0^8)=0$, we have $p_2(M_0^8)=1440$. Then by the second and the third equality in (3.2), we get
 $$Sig(M_0^8, \TT)=-2048, \ \ Sig(M_0^8, \WW)=6272.$$

By Lemma 3.1 and the above computations of the signature and twisted signatures of $B^8, \HHH P^2$ and $M_0^8$, we have

\be \begin{split} &Sig(B^8\times \HHH P^2\times \HHH P^2, \WW)\\
=&Sig(B^8, \WW)Sig(\HHH P^2)^2+2Sig(\HHH P^2, \WW)Sig(B^8)Sig(\HHH P^2)\\
&+2Sig(B^8, \TT)Sig(\HHH P^2, \TT)Sig(\HHH P^2)+Sig(B^8)Sig(\HHH P^2, \TT)^2\\
=&14336\\
\equiv& 2 \ (\mathrm{mod}\, 3\ZZ)
\end{split}
\ee
and
\be
\begin{split}
&Sig(M_0^8\times M_0^8\times M_0^8, \WW)\\
=&3Sig(M_0^8, \WW)Sig(M_0^8)^2+3Sig(M_0^8, \TT)^2Sig(M_0^8)\\
=&3\times 6272\times 224^2+3\times (-2048)^2\times 224\\
\equiv& 3 \ (\mathrm{mod}\, 9\ZZ).
\end{split}\ee

$$ $$
\noindent {\bf  Acknowledgments} We are indebted to Professor Kefeng Liu and Professor Weiping Zhang for helpful discussions. The second author is partially supported by a start up grant from National University of Singapore. The authors would like to thank the anonymous referee for very helpful comments and suggestions which helped improve the 
manuscript.

\end{document}